\documentclass[12pt,a4paper]{amsart}

\usepackage{latexsym}
\usepackage{amssymb}
\headheight0.6in
    \headsep22pt
    \textheight23.4cm
    \topmargin-1.7cm
    \oddsidemargin 0.5cm
    \evensidemargin0.5cm
    \textwidth15.3cm

\newtheorem{prop}{Proposition}[section]

\newtheorem{thm}[prop]{Theorem}
\newtheorem{lemma}[prop]{Lemma}
\newtheorem{dfn}[prop]{Definition}
\newtheorem{rem}[prop]{Remark}
\newcommand{\sw}[1]{{}_{(#1)}}
\newcommand{\opm}{\cdot_\text{op}}
\newcommand{\op}{\text{op}}
\newcommand{\id}{\text{id}}

\begin{document}

\title{Semi--entwining structures and their applications}
\author{Florin F. Nichita}
\author{Deepak Parashar}
\author{Bartosz Zieli\'nski}
\address{Institute of Mathematics "Simion Stoilow" of the Rumanian Academy, P.O. Box 1-764, RO-014700 Bucharest, Rumania}
\email{Florin.Nichita@imar.ro}
\address{Cambridge Cancer Trials Centre,
Department of Oncology, University of Cambridge,
Addenbrookes Hospital (Box 279),
Hills Road,
Cambridge CB2 0QQ,
U.K.}
\address{MRC Biostatistics Unit Hub in Trials Methodology Research,
University Forvie Site,
Robinson Way,
Cambridge CB2 0SR,
U.K.}
\email{dp409@cam.ac.uk}
\address{Department of Theoretical Physics and Informatics, University of \L{}\'od\'z, Pomorska 149/153 90-236 \L{}\'od\'z, Poland}
\email{bzielinski@uni.lodz.pl}

\subjclass[2000]{16W30, 13B02}
\begin{abstract}
Semi--entwining structures are proposed as concepts simpler than
entwining structures, yet they are shown to have interesting applications in
constructing intertwining operators and 
braided algebras, lifting functors, finding
solutions for Yang-Baxter systems, etc.
While  for entwining structures one
can associate corings, for semi--entwining structures one can associate
comodule algebra structures
where the algebra involved is a bialgebra satisfying certain properties.

\end{abstract}
\footnote{ISRN Algebra, vol. 2013, Article ID 817919. doi:10.1155/2013/817919}

\maketitle
\section{Introduction and preliminaries}
Quantum groups appeared as symmetries of integrable systems in 
quantum and statistical mechanics in the works of Drinfeld and Jimbo. They
led to intensive studies of Hopf algebras from a purely algebraic 
point of view and to the development of more general 
categories of Hopf-type modules (see \cite{mp} for a recent review). These serve as 
representations of Hopf algebras and related structures, 
such as those described by the solutions to the Yang-Baxter equations.

Entwining structures were introduced in \cite{brzmaj} as generalized
symmetries of non-commutative principal bundles,
and provide a unifying framework for various Hopf--type modules.
They are related to the so called {\it mixed distributive laws} introduced in \cite{beck}.

The
Yang--Baxter systems emerged as spectral--parameter
independent generalization of the quantum Yang--Baxter equation related to
non--ultra-local integrable systems \cite{hlav, hs}.
Interesting links between the
entwining structures and Yang--Baxter systems
have been established in \cite{fbrz} and \cite{barbu}.
Both topics have been a focus of recent research (see,
for example, \cite{np1, np2, calin, wisb, caen, Shauen}).

\bigskip

In this paper,
we propose the concepts of {\em semi--entwining} structures and 
{\em cosemi--entwining} structures within a generic framework incorporating results of other authors alongside ours.
The semi--entwining structures are some kind of
entwining structures between an algebra and a module which obey only one half of their axioms,
while cosemi--entwining structures  are kind of entwining structures between a coalgebra and a module obeying the other half of their axioms.
The main motivations for this terminology are: the new constructions
which require only the axioms selected by us
(constructions of
intertwining operators and Yang-Baxter systems of type II, or liftings of
functors), 
our new examples of semi-entwining structures,
simplification of the work with certain structures (Tambara bialgebras,
lifting of functors, 
braided algebras and
Yang-Baxter systems of type I),
the connections of the category of semi--entwining structures with 
other categories, and so forth. Let us observe that while  for entwining structures one
can associate corings, for semi--entwining structures one can associate
comodule algebra structures
provided the algebra involved is a bialgebra with certain properties (see 
Theorem
\ref{teo}). 

\bigskip   

The current paper is organised as follows. Section 2 contains the newly
introduced terminology with examples, new results and comments. Section 3 is about
some of the applications of these concepts, namely, new
constructions of intertwining operators and  braided algebras, lifting
functors, and the
presentations of Tambara bialgebras
and
of (new families of) Yang--Baxter systems (of type I and II).

The main results of our paper are Theorems 3.1, 3.4, 3.6, 3.22, and 3.23.
Theorems 3.11 and 3.13 are mentioned in the context of stating some of our results.
Theorem 3.16 is used to prove Theorem 3.22, while
Theorem 3.19 is related to Theorem 3.20.

\bigskip

Unless otherwise stated, we work over a commutative ring $R$.
Unadorned tensor products mean tensor products over
$R$. 

For any $R$-module $V$, $T(V)$ denotes tensor algebra of $V$.
In Subsection 3.5, we work over a field $\mathbb{K}$.
For $ V $ an $R$-module, we denote by
$ \ I : V \rightarrow V $
the identity map.
For any $R$-modules $ V $ and $ W $ we denote by
$ \  \tau = \tau_{V, W}: V \otimes W \rightarrow W \otimes V \  $ the
twist map,
defined by $ \tau_{V, W}(v \otimes w) = w \otimes v $.
Let $ \  \phi: V \otimes V \rightarrow V \otimes V  $
be an $R$-linear map.
We use the following notations:
${\phi_{12}}= \phi \otimes I , \  {\phi_{23}}= I \otimes \phi , \
{\phi_{13}}=(I \otimes \tau_{V, V})(\phi \otimes I)(I\otimes \tau_{V, V})$.

\begin{dfn}
An invertible  $ R$-linear map  $ \phi : V \otimes V \rightarrow
V \otimes V $
is called a Yang--Baxter
operator if it satisfies
\begin{equation}  \label{ybeq}
\phi_{12}  \circ  \phi_{23}  \circ  \phi_{12} = \phi_{23}  \circ  \phi_{12}  \circ  \phi_{23}
\end{equation}
\end{dfn}

\begin{rem}
Equation (\ref{ybeq}) is usually called the braid equation. It is a
well--known fact that the operator $\phi$ satisfies (\ref{ybeq}) if and only if
$\phi\circ \tau_{V, V}  $ satisfies
  the quantum Yang--Baxter equation
(if and only if
$ \tau_{V, V} \circ \phi $ satisfies
the quantum Yang--Baxter equation):
\begin{equation}   \label{ybeq2}
\phi_{12}  \circ  \phi_{13}  \circ  \phi_{23} = \phi_{23}  \circ  \phi_{13}  \circ  \phi_{12}
\end{equation}
\end{rem}

\bigskip

\section{Semi--entwining structures and related structures}

\bigskip

\begin{dfn}
{\bf [Semi--entwining structures]}\label{semientdef}
Let $A$ be an $R$-algebra, and let $B$ be an $R$-module, then the $R$-linear map
$\psi:B\otimes A\rightarrow A\otimes B$ is called a (right) semi--entwining map if it satisfies the following
conditions for all $a,a'\in A$, $b\in B$
(where we use a Sweedler-like summation notation
$\psi(b\otimes a)=a_\alpha\otimes b^\alpha$):
\begin{gather}
\psi(b\otimes 1_A)=1_A\otimes b\nonumber\\
\psi(b\otimes aa')=a_\alpha {a'}_\beta\otimes b^{\alpha\beta}\label{semi}
\end{gather}
If $B$ is also an $R$-algebra, and a semi--entwining map satisfies additionally
$$
\psi(1_B\otimes a)=a\otimes 1_B, \ \ \ 
\psi(bb'\otimes a)=a_{\alpha\beta}\otimes b^\beta {b'}^\alpha\label{ent},
 \ \ \forall a\in A, \  \forall b, b'\in B
$$
then the semi--entwining map is called an algebra factorization (in the sense of \cite{brz}).

If $B$ is a coalgebra and satisfies
\begin{gather*}
a_\alpha\varepsilon(b^\alpha)=a\varepsilon(b),\\
a_\alpha\otimes b^\alpha\sw{1}\otimes b^\alpha\sw{2}
=a_{\alpha\beta}\otimes b\sw{1}{}^\beta\otimes b\sw{2}{}^\alpha,
\ \  \ \forall a\in A, \  \forall b\in B,
\end{gather*}
then $\psi$ is called a (left-left) entwining map \cite{brzmaj}.
\end{dfn}

\begin{rem} Let $q\in R$. The following are examples of semi--entwining structures. Note that they do not have natural algebra factorization structures
in general.
\begin{enumerate}
\item Let $A$ be an $R$-algebra, then the $R$-linear map
$\gamma_q :A\otimes A\rightarrow A\otimes A$,
$\gamma_q (b \otimes a) = 1 \otimes ba + q ba \otimes 1 - q b \otimes a $
is a semi--entwining map.

Notice that $\gamma_q $ is a Yang-Baxter operator (according to \cite{florin}).

\item Let $A$ be an $R$-algebra, then the $R$-linear map
$\eta_q :A\otimes A\rightarrow A\otimes A$,
$\eta_q (b \otimes a) =  q (ba-ab) \otimes 1 + a \otimes b $
is a semi--entwining map.

Notice that $\eta_q $ is a Yang-Baxter operator related to
Lie algebras (see, e.g.,
\cite{n}).

\item Let $A$ be an $R$-algebra and let $M$ be a right $A$-module.
Then the $R$-linear map
$\phi :M\otimes A\rightarrow A\otimes M$,
$\phi (m \otimes a) =  1 \otimes ma $
is a semi--entwining map.
\end{enumerate}
\end{rem}

The proof of the next lemma is direct; the second statement is 
a well-known result.

\begin{lemma}\label{algfact}
If $\psi:B\otimes A\rightarrow A\otimes B$ is a semi--entwining map,
then

(i) $ A \otimes B $ becomes a right $ A $-module with the operation
$ (a\otimes b) * a'= aa'{}_\alpha\otimes b^\alpha$;

(ii) moreover, if $B$ is an algebra, we can define
a bilinear operation
\begin{gather*}
\cdot:(A\otimes B)\otimes (A\otimes B)\rightarrow (A\otimes B),\\
(a\otimes b)\otimes (a'\otimes b')\mapsto aa'{}_\alpha\otimes b^\alpha b',
\end{gather*}
and $\cdot$ is an associative and unital multiplication on $A\otimes B$ if and only if
 $\psi$ is an algebra factorization.
\end{lemma}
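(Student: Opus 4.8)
The plan is to verify the module and algebra axioms by direct computation in the Sweedler-type notation, using the two semi-entwining conditions for part (i) and, for the nontrivial direction of part (ii), the two extra factorization conditions.

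For (i), unitality is immediate: $(a\otimes b)*1_A = a\otimes b$ by the first semi-entwining condition $\psi(b\otimes 1_A)=1_A\otimes b$. For the associativity axiom $\bigl((a\otimes b)*a'\bigr)*a''=(a\otimes b)*(a'a'')$, the left-hand side equals $(aa'{}_\alpha\otimes b^\alpha)*a''=aa'{}_\alpha a''{}_\beta\otimes b^{\alpha\beta}$, obtained by a second application of the defining formula (introducing the index $\beta$ on $a''$ and on $b^\alpha$), while the right-hand side is $a(a'a'')_\alpha\otimes b^\alpha$; these coincide precisely by condition (\ref{semi}) applied to $\psi(b\otimes a'a'')$. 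This settles (i).

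For (ii) I would deal with the unit first. The element $1_A\otimes 1_B$ is automatically a right unit for $\cdot$, since $(a\otimes b)\cdot(1_A\otimes 1_B)=a\otimes b$ by $\psi(b\otimes 1_A)=1_A\otimes b$ together with unitality of $B$. Hence, if $\cdot$ is unital, its two-sided unit must equal $1_A\otimes 1_B$, and then the left-unit law $(1_A\otimes 1_B)\cdot(a\otimes b)=a\otimes b$ reads $a_\alpha\otimes(1_B)^\alpha b=a\otimes b$ for all $a,b$, which (specializing to $b=1_B$, and conversely) is equivalent to $\psi(1_B\otimes a)=a\otimes 1_B$. So the unit half of the equivalence is settled, and it remains to match associativity of $\cdot$ with the remaining factorization axiom $\psi(bb'\otimes a)=a_{\alpha\beta}\otimes b^\beta{b'}^\alpha$.

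For associativity I would write $\cdot=(m_A\otimes m_B)\circ(\id_A\otimes\psi\otimes\id_B)$ and expand $\bigl((a\otimes b)\cdot(a'\otimes b')\bigr)\cdot(a''\otimes b'')$ and $(a\otimes b)\cdot\bigl((a'\otimes b')\cdot(a''\otimes b'')\bigr)$ into composites of $m_A$, $m_B$ and $\psi$; using associativity of $m_A$ and $m_B$, the two sides become equal once $\psi$ may be transported past $m_A$ on the first tensor factor (which is condition (\ref{semi})) and past $m_B$ on the second tensor factor (which is exactly $\psi(bb'\otimes a)=a_{\alpha\beta}\otimes b^\beta{b'}^\alpha$); equivalently, one recognizes $(A\otimes B,\cdot)$ as the twisted tensor product $A\otimes_\psi B$ and quotes its standard characterization. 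For the converse I would specialize the associativity identity to the triple $1_A\otimes b$, $1_A\otimes b'$, $a\otimes 1_B$: the right-bracketed product reduces to $a_\alpha\otimes(bb')^\alpha$ and the left-bracketed one to $a_{\alpha\beta}\otimes b^\beta{b'}^\alpha$, which forces the missing axiom. The only mildly delicate point is the bookkeeping of the nested Sweedler indices when two copies of $\psi$ are in play; this is why I would carry out the general associativity verification at the level of maps, keeping the element-wise specialization for the easier ``only if'' direction.
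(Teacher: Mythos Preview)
Your argument is correct. The paper itself does not give a proof of this lemma at all: it simply says that the proof is direct and that part (ii) is well known, so your write-up is effectively supplying what the paper omits rather than paralleling an existing argument. One cosmetic slip: in your final specialization for the ``only if'' direction, the labels are swapped---the left-bracketed product $((1_A\otimes b)\cdot(1_A\otimes b'))\cdot(a\otimes 1_B)$ is the one reducing to $a_\alpha\otimes(bb')^\alpha$, while the right-bracketed product gives $a_{\alpha\beta}\otimes b^\beta b'^\alpha$---but the resulting identity is of course the same.
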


\begin{rem}
Some authors call the above map $\psi$ a twisting map; see, for example,
\cite{ppv}, where a unifying framework for various twisted algebras is provided.
\end{rem}

\begin{rem}
Suppose that $A$ is a right $H$-comodule algebra (where $H$ is a bialgebra)
and $B$ is a right $H$-module. Then
\begin{equation}\label{entfor}
\psi_H:B\otimes A\rightarrow A\otimes B,\ b\otimes a\mapsto a\sw{0}\otimes ba\sw{1}
\end{equation}
is a semi--entwining map. Moreover, if $B$ is an $H$-module
algebra, then $\psi_H$ thus defined is an algebra factorization.
Finally, if $B$ is an $H$-module coalgebra, then $\psi$ is an entwining map,
and
 $(A,H,B)$ is called a Doi-Koppinen structure (see \cite{Shauen}).
\end{rem}

\begin{rem}\label{eq}
Let $A$ be an $R$-algebra.
We define the category of semi--entwining structures over $A$, whose objects are
triples $(B, \  A,\  \phi) $, and morphisms $ f: (B, \  A,\  \phi)
\rightarrow (B', \  A,\  \phi')$ are $R$-linear maps
$ f: B \rightarrow B'$ satisfying the relation
$ (I_A \otimes f) \circ \phi = \phi' \circ (f \otimes I_A)$.
Then, there exist the following functors.

\begin{enumerate}
\item F : Mod $A \ \rightarrow$ Semi--Entwining Str $A$

 $M \mapsto (M, \  A,\  \phi) $,
where $ \phi :M\otimes A\rightarrow A\otimes M, \ \
\phi (m \otimes a) =  1 \otimes ma $;

\item G :  Semi--Entwining Str $A$ $\ \rightarrow$ Mod $A$

$ (B, \ A, \ \psi) \mapsto  A \otimes B $,
where
$ A \otimes B $ is a right $ A $-module with the operation
$ (a\otimes b) * a'= aa'{}_\alpha\otimes b^\alpha$.

\end{enumerate}

These two functors do not form an equivalence of categories in general,
because $ F \circ G \simeq A \otimes - $ and  $ G \circ F \simeq A \otimes -$.

\end{rem}

\begin{thm}\label{teo}

If $\psi:B\otimes A\rightarrow A\otimes B$ is a semi--entwining map
and
$A$ is bialgebra, then:

\begin{enumerate}

\item $B$ is an $A$-bimodule with the following actions:

$a \circ b = \epsilon(a) b, \ \ \
b * a = \epsilon(a_{\alpha}) b^{\alpha}, \ \  
\forall a \in A, \ \forall b \in B.$

\item $B \oplus A$ is an algebra with 
the unit $(0,\ 1)$ and
the product 

$(b, \ a) \ (b', \ a') \ = \ ( b*a'+ a \circ b',\  aa')$,

and a right $A$-comodule with the coaction
$ \ \  b \oplus a \mapsto b \otimes 1 + (\sum a_1 \otimes a_2)$.

\item If $A$ has a bilateral integral (i.e.,$ \ \  ax=xa=\epsilon(a)x \ \ 
\forall a \in A$)
which is a group-like element (i.e.,$ \  \Delta(x)= x \otimes x, \ \
\epsilon(x)=1 $),
then  $B \oplus A$ is an $A$-comodule algebra with the coaction

$ b \oplus a \mapsto b \otimes x + (\sum a_1 \otimes a_2)$.

\end{enumerate}
\end{thm}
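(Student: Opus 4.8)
The plan is to verify each of the three assertions by direct, if careful, computation, exploiting the semi--entwining axioms (\ref{semi}) and the bialgebra structure of $A$.

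For part (1), I would first check that $a\circ b=\epsilon(a)b$ defines a left $A$-module structure: this is immediate since $\epsilon$ is an algebra map, so $\epsilon(aa')b=\epsilon(a)\epsilon(a')b$ and $\epsilon(1_A)b=b$. The right action $b*a=\epsilon(a_\alpha)b^\alpha$ requires the semi--entwining axioms: unitality $b*1_A=b$ follows from $\psi(b\otimes 1_A)=1_A\otimes b$, and associativity $(b*a)*a'=b*(aa')$ follows by applying $\epsilon\otimes\epsilon\otimes\mathrm{id}$ (in the appropriate slots) to the multiplicativity axiom $\psi(b\otimes aa')=a_\alpha a'_\beta\otimes b^{\alpha\beta}$ — here one uses that $\epsilon$ is multiplicative to split $\epsilon(a_\alpha a'_\beta)=\epsilon(a_\alpha)\epsilon(a'_\beta)$. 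Finally the two actions commute because the left action only ``sees'' $A$ through $\epsilon$, which the right action also does: $(a\circ b)*a'=\epsilon(a)(b*a')=a\circ(b*a')$, so $B$ is an $A$-bimodule.

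For part (2), the product on $B\oplus A$ is the standard square-zero-type extension twisted by the bimodule actions (a trivial/square-zero extension of $A$ by $B$, since there is no $b*b'$ term). Associativity of $(b,a)(b',a')=(b*a'+a\circ b',\,aa')$ reduces, in the $A$-component, to associativity in $A$, and in the $B$-component to: compatibility of the two actions (part (1)) plus the bimodule axioms — this is the routine verification that a square-zero extension of an algebra by a bimodule is associative. Unitality with $(0,1_A)$ is clear from $b*1_A=b$ and $1_A\circ b=b$. For the $A$-comodule structure $\rho(b\oplus a)=b\otimes 1_A+\sum a_1\otimes a_2$, coassociativity follows from coassociativity of $\Delta$ on the $a$-part and from $b\otimes 1_A\otimes 1_A$ matching on the $b$-part (since $\Delta(1_A)=1_A\otimes 1_A$); counitality follows from the counit axiom for $\Delta$ and $\epsilon(1_A)=1_A$, giving $b\oplus a$ back.

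For part (3), with $x$ a bilateral integral that is group-like, the coaction becomes $\rho(b\oplus a)=b\otimes x+\sum a_1\otimes a_2$. Coassociativity on the $b$-part now uses $\Delta(x)=x\otimes x$, and counitality uses $\epsilon(x)=1$. The main point — and the one place the integrality/group-like hypothesis is genuinely needed — is that $\rho$ must now be an \emph{algebra map}, i.e.\ $\rho\bigl((b,a)(b',a')\bigr)=\rho(b,a)\rho(b',a')$ in $(B\oplus A)\otimes A$. Expanding the right side one gets cross terms of the form $(b*a')\otimes x\cdot(\text{something})$ and $\sum a_1\otimes a_2\cdot x$; the crucial computation is that $\sum a_1b'\otimes a_2 x = \sum a_1b'\otimes \epsilon(a_2)x = b'\otimes \sum a_1\epsilon(a_2)\,x$, wait more carefully: the integral property $a_2 x = \epsilon(a_2)x$ collapses $\sum a_1\otimes a_2 x$ to $\sum a_1\epsilon(a_2)\otimes x = a\otimes x$ (after also handling the left/right placement), and likewise $x a_1 = \epsilon(a_1)x$ on the other side, so that the cross terms involving $A$-elements acting on the $b\otimes x$ summand reproduce exactly $b*a'\otimes x$ and $a\circ b'\otimes x$. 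This is the heart of the argument; the rest is bookkeeping. I expect this last verification — matching the mixed terms in the algebra-map condition for $\rho$, and seeing precisely where ``$ax=xa=\epsilon(a)x$'' and group-likeness of $x$ enter — to be the main obstacle, everything else being a straightforward unwinding of the semi--entwining and bialgebra axioms.
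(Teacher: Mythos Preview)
Your proposal is correct and follows essentially the same approach as the paper's own proof --- direct verification using the semi--entwining axioms and the bialgebra structure --- but it is considerably more detailed: the paper dispatches part~(1) in a single line, for part~(2) sketches only the coassociativity check of the coaction, and leaves part~(3) entirely to the reader, whereas you correctly identify the algebra-map property of $\rho$ (and the role of the integral and group-like hypotheses) as the crux of part~(3) and outline how it works.
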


{\bf Proof.} 
(1) follows from the linearity of $ \epsilon $ and $ \psi $.

(2) follows from the previous statement and from direct computations as follows:

$      b \oplus a \mapsto b \otimes 1 + (\sum a_1 \otimes a_2) \ $
 maps to either
$      b \otimes (1 \otimes 1) + (\sum a_1 \otimes (a_{21} \otimes a_{22}))$
                  (if we apply the comultiplication of the algebra), 
or to

$(b \otimes 1) \otimes 1 + (\sum 0 \otimes 0) \otimes 1 + 0 \otimes 1 +
(\sum (a_{11} \otimes a_{12}) \otimes a_2) $
(if we apply the coaction).

We observe that the two outputs are equal.

(3) is a generalisation of (2), and is
left to the reader.\qed

\bigskip

Similarly we have the dual notion as follows.

\begin{dfn}{\bf [Cosemi--entwining structures]}
Let $C$ be an $R$-coalgebra, and $D$ an $R$-module.
A $R$-linear map $\psi:D\otimes C\rightarrow C\otimes D$ is called a cosemi--entwining map
if it satisfies the following
conditions for all $c,c'\in C$, $d\in D$
(where we use a Sweedler-like summation notation
$\psi(d\otimes c)=c^\alpha\otimes d_\alpha$):
\begin{gather*}
\varepsilon(c^\alpha)d_\alpha=\varepsilon(c)d,\\
c^\alpha\sw{1}\otimes c^\alpha\sw{2}\otimes d_\alpha=
c\sw{1}{}^\alpha\otimes c\sw{2}{}^\beta\otimes d_{\alpha\beta}.
\end{gather*}
If $D$ is also a coalgebra and $\psi$ satisfies additionally
\begin{gather*}
c^\alpha\varepsilon(d_\alpha)=c\varepsilon(d),\\
c^\alpha\otimes d_\alpha\sw{1}\otimes d_\alpha\sw{2}
=c^{\alpha\beta}\otimes d\sw{1}{}_\beta\otimes d\sw{2}{}_\alpha \ \ \
\forall \  d \in D, \  \forall \ c\in C,
\end{gather*}
then $\psi$ is called a coalgebra factorization.

If, on the other hand, $D$ is an algebra, and $\psi$ satisfies additionally
$$
\psi(1_D\otimes c)=c\otimes 1_D, \ \ \
c^\alpha \otimes (dd')_\alpha=c^{\alpha\beta}\otimes d_\beta d'{}_\alpha, \ \ \
\forall \  d,d'\in D, \  \forall \ c\in C, $$
 then $\psi$ is called a (right-right)  entwining map.
\end{dfn}

The next result is dual to Lemma \ref{algfact}.

\begin{lemma}\label{coalgfact}
Suppose that $\psi:D\otimes C\rightarrow C\otimes D$  is a cosemi--entwining map and
$D$ is a coalgebra. Define a map
\begin{gather*}
\Delta_{D\otimes C}:(D\otimes C)\rightarrow (D\otimes C)\otimes (D\otimes C),\\
d\otimes c\mapsto (d\sw{1}\otimes c\sw{1}{}^\alpha)\otimes (d\sw{2}{}_\alpha\otimes c\sw{2}).
\end{gather*}
Then $\Delta$ makes $D\otimes C$ a coalgebra if and only if $\psi$ is a coalgebra
factorization.
\end{lemma}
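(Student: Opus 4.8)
The plan is to dualise the proof of Lemma~\ref{algfact}. I would equip $D\otimes C$ with the evident candidate counit $\varepsilon_{D\otimes C}=\varepsilon_D\otimes\varepsilon_C$ and verify the two counit axioms and coassociativity for $\Delta_{D\otimes C}$ one at a time, keeping track of which cosemi--entwining identity and which factorization identity each axiom consumes; throughout I write $\psi(d\otimes c)=c^\alpha\otimes d_\alpha$ as in the definition.

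First I would check the counit axioms. For the left one, $(\varepsilon_{D\otimes C}\otimes\id)\circ\Delta_{D\otimes C}$ sends $d\otimes c$ to $\varepsilon(d\sw1)\varepsilon(c\sw1{}^\alpha)\,d\sw2{}_\alpha\otimes c\sw2$; applying the first cosemi--entwining identity $\varepsilon(c^\alpha)d_\alpha=\varepsilon(c)d$ to the pair $d\sw2\otimes c\sw1$ collapses this to $\varepsilon(d\sw1)\varepsilon(c\sw1)\,d\sw2\otimes c\sw2=d\otimes c$ by counitality of $D$ and $C$, so the left counit axiom holds for \emph{every} cosemi--entwining map and uses nothing from the factorization hypotheses. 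The right counit axiom $(\id\otimes\varepsilon_{D\otimes C})\circ\Delta_{D\otimes C}=\id$ unwinds to $d\sw1\otimes c\sw1{}^\alpha\varepsilon(d\sw2{}_\alpha)\varepsilon(c\sw2)=d\otimes c$, which holds precisely when $c^\alpha\varepsilon(d_\alpha)=c\varepsilon(d)$: the ``if'' direction is immediate from counitality of $D$ and $C$, and for ``only if'' I would apply $\varepsilon_D\otimes\id_C$ to the displayed identity and use linearity of $\psi$ together with $\varepsilon(d\sw1)d\sw2=d$ to read off the first factorization identity.

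Next I would treat coassociativity, computing $(\Delta_{D\otimes C}\otimes\id)\circ\Delta_{D\otimes C}$ and $(\id\otimes\Delta_{D\otimes C})\circ\Delta_{D\otimes C}$ on $d\otimes c$. Using coassociativity of $\Delta_C$ and $\Delta_D$ together with the second cosemi--entwining identity $c^\alpha\sw1\otimes c^\alpha\sw2\otimes d_\alpha=c\sw1{}^\alpha\otimes c\sw2{}^\beta\otimes d_{\alpha\beta}$ to push $\psi$ past the comultiplication of $C$, both iterated coproducts can be brought to the same shape except that one carries the subexpression $c^\alpha\otimes d_\alpha\sw1\otimes d_\alpha\sw2$ (with the argument specialised to suitable $C$- and $D$-components) where the other carries $c^{\alpha\beta}\otimes d\sw1{}_\beta\otimes d\sw2{}_\alpha$. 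Hence the two agree exactly when the second factorization identity $c^\alpha\otimes d_\alpha\sw1\otimes d_\alpha\sw2=c^{\alpha\beta}\otimes d\sw1{}_\beta\otimes d\sw2{}_\alpha$ holds; for the converse I would again kill the auxiliary tensor legs coming from $C$ by applying $\varepsilon_C$ in the appropriate slots, so that coassociativity of $\Delta_{D\otimes C}$ forces that identity. Combining the three computations gives the claim.

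The main obstacle is purely bookkeeping: keeping the several layers of Sweedler-type indices straight — two iterated coproducts on $C$ and on $D$, plus the $\alpha,\beta$ indices arising from one or two applications of $\psi$ — and in particular invoking the second cosemi--entwining identity in exactly the right tensor slot so the two iterated coproducts land in a common normal form. No new idea beyond Lemma~\ref{algfact} is needed; alternatively one could observe that $\Delta_{D\otimes C}$ is, via the twist $\tau$, formally dual to the product of Lemma~\ref{algfact}(ii) and transport that result, but carrying out the index computation directly is the cleanest route.
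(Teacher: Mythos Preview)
Your proposal is correct and follows essentially the same route as the paper's proof: check the two counit axioms (the left one using only the first cosemi--entwining identity, the right one being equivalent to the first factorization identity, with the ``only if'' extracted by applying $\varepsilon_D\otimes\id_C$), and then show coassociativity is equivalent to the second factorization identity by using the second cosemi--entwining identity to reduce both iterated coproducts to a common normal form and applying $\varepsilon_C$ to isolate the needed condition. The paper carries this out more tersely and leaves several steps to the reader, but the structure is identical.
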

\begin{proof}
For $D\otimes C$ to be a coalgebra it must satisfy the counit property, i.e., 
$(\varepsilon_{D\otimes C}\otimes\id)\circ\Delta_{D\otimes C}=(\id\otimes\varepsilon_{D\otimes C})\circ\Delta_{D\otimes C}=\id$
and the  coassociativity property. To check a counit property note that for all $d\in D$ and $c\in C$:
$$
(\id\otimes\varepsilon_{D\otimes C})\circ\Delta_{D\otimes C}(d\otimes c)=d\sw{1}\otimes c^\alpha\varepsilon(d\sw{2}{}_\alpha).
$$
Now, if $d\otimes c=d\sw{1}\otimes c^\alpha\varepsilon(d\sw{2}{}_\alpha)$, then applying $\varepsilon\otimes\id$ to both
sides of this equation yields $c\varepsilon(d)=c^\alpha\varepsilon(d_\alpha)$. Similarly,
we prove the other half of the counit property. Conversely, $c\varepsilon(d)=c^\alpha\varepsilon(d_\alpha)$
implies the counit property.

Using the fact that $\psi$ is a cosemi-entwining map it is easy to prove that the coassociativity
implies that for all $c\in C$ and $d\in D$
$$
c\sw{1}{}^\alpha\otimes d_\alpha\sw{1}\otimes c\sw{2}{}^\beta\otimes d_\alpha\sw{2}{}_\beta
=c\sw{1}{}^{\alpha\gamma}\otimes d\sw{1}{}_\gamma\otimes c\sw{2}{}^\beta\otimes d\sw{2}{}_{\alpha\beta}
$$
Applying $\varepsilon$ to the third leg and using the fact that $\psi$ is a cosemi-entwining map
yields
$$
c^\alpha\otimes d_\alpha\sw{1}\otimes  d_\alpha\sw{2}
=c^{\alpha\gamma}\otimes d\sw{1}{}_\gamma\otimes d\sw{2}{}_{\alpha}
$$
We leave the rest of the proof to the reader.
\end{proof}

\begin{rem}
Suppose that $C$ is a right $H$-comodule coalgebra (where $H$ is a bialgebra)
and $D$ is a right $H$-module. Then
\begin{equation}\label{altdoi}
\psi :D\otimes C\rightarrow C\otimes D,\ d\otimes c\mapsto c\sw{0}\otimes dc\sw{1}
\end{equation}
is a cosemi--entwining map. Furthermore, if $D$ is an $H$-module coalgebra, then $\psi$ is a coalgebra factorization.
Otherwise, if $D$ is an $H$-module algebra, then $\psi$ is a left-left entwining map.
Moreover, in this last case, $(C,H,D)$ is called an alternative  Doi-Koppinen structure.
\end{rem}

Let $X$, $Y$ be any $R$-modules. Any $x^*\in X^*$ can be viewed as the map
\begin{equation*}
x^*:X\otimes Y\rightarrow Y,\ x\otimes y\mapsto x^*(x)y.
\end{equation*}
Also any tensor $\sum_i x^*_i\otimes y_i\in X^*\otimes Y$ can be considered as a map
$X\ni x\mapsto\sum_ix^*_i(x)y_i\in Y$.
Finally, if $X$ is finitely generated and projective,
then $\text{Hom}_R(X,Y)\sim X^*\otimes Y$.
For any $y\in Y$, an $R$-module map $\Psi:Y\otimes X\rightarrow X\otimes Y$ defines
a map
\begin{equation*}
\Psi_y=\Psi(y\otimes\cdot):X\rightarrow X\otimes Y.
\end{equation*}
We define a dual of $\Psi^{*_X}:Y\otimes X^*\rightarrow X^*\otimes Y$ with respect to the
$X$-part as $\Psi^{*_X}(y\otimes x^*)=\Psi^*_y(x^*)$, where
$\Psi_y^*:X^*\rightarrow X^*\otimes Y$ is defined by
\begin{equation*}
x^*(\Psi_y(x))=\psi_y^*(x^*)(x),\ \text{for all }x\in X,x^*\in X^*, y\in Y.
\end{equation*}
Similarly, one defines a dual $\Psi^{*_Y}:Y^*\otimes X\rightarrow X\otimes Y^*$ of $\Psi$
with respect to the $Y$-part.

\bigskip

The next lemma is a standard result. 

\begin{lemma}
Suppose that $C$ is a finitely generated projective $R$-coalgebra, and
$(c_i\in C,c^*_i)$ is a dual basis.
Let $\psi:D\otimes C\rightarrow C\otimes D$ be a cosemi--entwining map.
Then $\psi^{*_C}:D\otimes C^*\rightarrow C^* \otimes D$ is a semi--entwining map for the convolution algebra
$C^*$.

Explicitly
\begin{equation*}
\psi^{*_C}(d\otimes c^*)=\sum_i c^*_i\otimes c^*(c_i^\alpha)d_\alpha.
\end{equation*}
\end{lemma}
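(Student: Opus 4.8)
\emph{The explicit formula and set-up.} The plan is to unwind the definition of the $C$-dualization and then verify the two axioms of Definition~\ref{semientdef} directly, the only inputs being the two defining identities of a cosemi-entwining map and the dual-basis reconstruction property. Applying the construction of $\Psi^{*_X}$ above with $X=C$, $Y=D$, $\Psi=\psi$, we have $\psi^{*_C}(d\otimes c^*)=\psi_d^*(c^*)$, where $\psi_d=\psi(d\otimes\cdot)\colon C\to C\otimes D$ and $\psi_d^*\colon C^*\to C^*\otimes D$ is determined by $\psi_d^*(c^*)(c)=c^*(\psi_d(c))=c^*(c^\alpha)d_\alpha$; expanding in the dual basis, $\psi_d^*(c^*)=\sum_i c^*_i\otimes\psi_d^*(c^*)(c_i)=\sum_i c^*_i\otimes c^*(c_i^\alpha)d_\alpha$, which is the stated formula. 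In what follows I write the convolution product on $C^*$ as $(f*g)(c)=f(c\sw{1})g(c\sw{2})$, with unit $\varepsilon$, and I use that, $C$ being finitely generated projective, every $R$-linear map $g$ from $C$ (or from $C\otimes C$) to an $R$-module satisfies $g(c)=\sum_i c^*_i(c)\,g(c_i)$ (resp.\ $g(c\otimes c')=\sum_{i,j}c^*_i(c)c^*_j(c')\,g(c_i\otimes c_j)$); in particular an element of $C^*\otimes D\cong\text{Hom}_R(C,D)$ is determined by its values on $C$.

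\emph{The unit axiom.} From the explicit formula and the first cosemi-entwining identity $\varepsilon(c^\alpha)d_\alpha=\varepsilon(c)d$ one gets $\psi^{*_C}(d\otimes\varepsilon)=\sum_i c^*_i\otimes\varepsilon(c_i^\alpha)d_\alpha=\big(\sum_i\varepsilon(c_i)c^*_i\big)\otimes d$, and since $\big(\sum_i\varepsilon(c_i)c^*_i\big)(c)=\varepsilon\big(\sum_i c^*_i(c)c_i\big)=\varepsilon(c)$, the first leg equals $\varepsilon=1_{C^*}$, so $\psi^{*_C}(d\otimes 1_{C^*})=1_{C^*}\otimes d$.

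\emph{The multiplicativity axiom.} Here I would show that the two elements $\psi^{*_C}(d\otimes(c^**d^*))$ and $(c^*)_\alpha*(d^*)_\beta\otimes d^{\alpha\beta}$ of $C^*\otimes D$ agree by pairing their $C^*$-legs with an arbitrary $c\in C$. On the left, the explicit formula gives $\psi^{*_C}(d\otimes(c^**d^*))=\sum_i c^*_i\otimes(c^**d^*)(c_i^\alpha)d_\alpha$; since $x\mapsto(c^**d^*)(x^\alpha)d_\alpha$ is $R$-linear, pairing with $c$ and reconstructing gives $(c^**d^*)(c^\alpha)d_\alpha=c^*\big((c^\alpha)\sw{1}\big)d^*\big((c^\alpha)\sw{2}\big)d_\alpha$, i.e.\ the image of the left-hand side of the second cosemi-entwining identity $(c^\alpha)\sw{1}\otimes(c^\alpha)\sw{2}\otimes d_\alpha=c\sw{1}{}^\alpha\otimes c\sw{2}{}^\beta\otimes d_{\alpha\beta}$ under $c^*\otimes d^*\otimes\id_D$. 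On the right, applying the explicit formula twice and then multiplying the two $C^*$-legs gives $\sum_{i,j}(c^*_i*c^*_j)\otimes c^*(c_i^\alpha)d^*(c_j^\beta)(d_\alpha)_\beta$, where $\psi(d\otimes c_i)=c_i^\alpha\otimes d_\alpha$ and $\psi(d_\alpha\otimes c_j)=c_j^\beta\otimes(d_\alpha)_\beta$; pairing with $c$, using $(c^*_i*c^*_j)(c)=c^*_i(c\sw{1})c^*_j(c\sw{2})$, and applying the reconstruction property in each leg to the ($R$-bilinear) expression yields $c^*(c\sw{1}{}^\alpha)d^*(c\sw{2}{}^\beta)(d_\alpha)_\beta$, which is precisely the image of the right-hand side of that same identity under $c^*\otimes d^*\otimes\id_D$. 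Hence the two sides coincide.

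\emph{Main obstacle.} The only delicate point is the index bookkeeping: one must check that the iterated expressions on the right are genuinely $R$-linear, resp.\ $R$-bilinear, in the dual-basis vectors so that reconstruction applies, and one must align the convention for the doubly-applied symbol $d_{\alpha\beta}$ in the second cosemi-entwining identity---namely $d_{\alpha\beta}=(d_\alpha)_\beta$ obtained by entwining along $c\sw{1}$ first and then along $c\sw{2}$---with the order in which $\psi^{*_C}$ is iterated in the multiplicativity axiom. Once this is done, the verification reduces entirely to the two cosemi-entwining identities together with the dual-basis identity for $\varepsilon$, so the lemma indeed follows by a routine (if notation-heavy) computation.
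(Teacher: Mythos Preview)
The paper does not actually prove this lemma: it is introduced only with the sentence ``The next lemma is a standard result'' and no argument is given. Your proposal is correct and supplies precisely the routine verification the paper omits --- deriving the explicit formula from the definition of $\Psi^{*_X}$, then checking the two axioms of Definition~\ref{semientdef} by evaluating the $C^*$-leg on an arbitrary $c\in C$, using dual-basis reconstruction (legitimate since $C$ is finitely generated projective, so $C^*\otimes D\cong\mathrm{Hom}_R(C,D)$), and reducing to the two cosemi-entwining identities. Your reading of the convention $d_{\alpha\beta}=(d_\alpha)_\beta$ with $c\sw{1}$ entwined first is the correct one, matching the diagrammatic form $(\Delta\otimes\id)\circ\psi=(\id\otimes\psi)\circ(\psi\otimes\id)\circ(\id\otimes\Delta)$ of the second cosemi-entwining axiom.
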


\begin{dfn} {\bf [Semi--entwined modules and comodules]} Let $A$ be an algebra, and let $V$ be a vector space. Suppose that
$\psi:V\otimes A\rightarrow A\otimes V$ is a semi--entwining map and $M$ a right $A$-module.
\begin{enumerate}
\item Let $\triangleleft:M\otimes V\rightarrow M$
be a right measuring, such that, for all $m\in M$, $a\in A$, $v\in V$,
\begin{equation*}
ma_\alpha\triangleleft v^\alpha=(m\triangleleft v)a.
\end{equation*}
Then $M$ is called a $(A,V,\psi)$-semi--entwined module.
\item Let $\rho:M\rightarrow M\otimes V$, $m\rightarrow m\sw{0}\otimes m\sw{1}$ be a right comeasuring, such that, for all
$m\in M$, $a\in A$,
\begin{equation*}
\rho(ma)=m\sw{0}\psi(m\sw{1}\otimes a).
\end{equation*}
Then $M$ is called a $(A,V,\psi)$-semi--entwined comodule.
\end{enumerate}
\end{dfn}

\begin{rem} The following are examples of
semi--entwining modules related to Remark 2.2:
\begin{enumerate}
\item let $A$ be an $R$-algebra, let $M$ be a right $A$ module, $V$ = $A$,
$\psi = \gamma_q$, and the right measuring the regular action of $A$
on $M$;
\item let $A$ be an $R$-algebra, let $M$ be a right $A$ module, $V$ = $A$,
$\psi = \eta_1$, and the right measuring the regular action of $A$
on $M$.
\end{enumerate}
\end{rem}

\begin{rem} The following are examples of
semi--entwining comodules related to Remark 2.2:
\begin{enumerate}
\item let $A$ be an $R$-algebra, let $M$ be a right $A$ module, $V$ = $A$,
$\psi = \gamma_1$, and the right comeasuring $ \rho(m) = m \otimes 1 $;
\item let $A$ be an $R$-algebra, let $M$ be a right $A$ module, $V$ = $A$,
$\psi = \eta_q$,
and the right comeasuring $ \rho(m) = m \otimes 1 $.
\end{enumerate}
\end{rem}

\bigskip

\newcommand{\swb}[1]{{}_{\overline{#1}}}

\begin{dfn}{\bf [Cosemi--entwined modules and comodules]} Let $C$ be a coalgebra, and $V$ a vector space. Suppose that
$\psi:V\otimes C\rightarrow C\otimes V$ is a cosemi--entwining map and $M$ a left $C$-comodule, with a coaction ${}^C\rho:M\rightarrow C\otimes M$,
$m\mapsto m\sw{-1}\otimes m\sw{0}$.
\begin{enumerate}
\item Let $\triangleright:V\otimes M\rightarrow M$
be a left measuring, such that, for all $m\in M$, $v\in V$,
\begin{equation*}
{}^C\rho(v\triangleright m)=m\sw{-1}{}_\alpha\otimes v^\alpha\triangleright m\sw{0}.
\end{equation*}
Then $M$ is called a $(C,V,\psi)$-cosemi--entwined module.
\item Let ${}^V\rho:M\rightarrow V\otimes M$, $m\mapsto m\swb{-1}\otimes m\swb{0}$ be a left comeasuring, such that for all $m\in M$,
\begin{equation*}
(\text{id}_C\otimes {}^V\rho)\circ{}^C\rho(m)=m\swb{0}\sw{-1}{}_\alpha\otimes m\swb{-1}{}^\alpha\otimes m\swb{0}\sw{0}.
\end{equation*}
Then $M$ is called a $(C,V,\psi)$-cosemi--entwined comodule.
\end{enumerate}
\end{dfn}

Note that if $V$ is a coalgebra and $\psi:V\otimes A\rightarrow A\otimes V$ is an entwining map, then a semi--entwined module $M$ is an entwined module.

The following result is standard, but we provide a partial proof for completeness.
\begin{lemma}
Suppose that $(A,B,\psi)$ is an algebra factorization, and $M$ is a $(A,B,\psi)$ semi--entwined module such that the $B$ measuring is an action.
Then $M$ is a right $A\otimes B$-module, with an algebra structure on
$A\otimes B$ as in Lemma~\ref{algfact}, and $A\otimes B$ action on $M$ given by $m(a\otimes b)=(ma)\triangleleft b$.
Conversely, any right $A\otimes B$ module is a semi--entwined $(A,B,\psi)$-module with $A$ and $B$ actions given
 by $ma=m(a\otimes 1_B)$ and $m\triangleleft b=m(1_A\otimes b)$, respectively.
\end{lemma}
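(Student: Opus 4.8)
The plan is to verify both constructions by direct computation; the only care needed is to keep track of which part of the axioms (the semi--entwining part, or the additional factorization part) gets used where. Throughout, recall from Lemma~\ref{algfact} that, $\psi$ being an algebra factorization, $A\otimes B$ is a unital associative algebra with product $(a\otimes b)(a'\otimes b')=aa'{}_\alpha\otimes b^\alpha b'$ and unit $1_A\otimes 1_B$, and that the hypothesis ``the $B$-measuring is an action'' means precisely that $\triangleleft$ makes $M$ a unital right $B$-module.

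For the first assertion I would define $m\cdot(a\otimes b):=(ma)\triangleleft b$. The unit axiom $m\cdot(1_A\otimes 1_B)=(m1_A)\triangleleft 1_B=m$ is immediate from unitality of the $A$-action and of $\triangleleft$. For associativity, expand the left side of $(m\cdot(a\otimes b))\cdot(a'\otimes b')=m\cdot((a\otimes b)(a'\otimes b'))$ as $(((ma)\triangleleft b)a')\triangleleft b'$; apply the semi--entwined compatibility $na'{}_\alpha\triangleleft b^\alpha=(n\triangleleft b)a'$ with $n=ma$ to rewrite $((ma)\triangleleft b)a'$ as $(ma)a'{}_\alpha\triangleleft b^\alpha=m(aa'{}_\alpha)\triangleleft b^\alpha$ (using associativity of the $A$-action), and then use that $\triangleleft$ is an action to reach $m(aa'{}_\alpha)\triangleleft(b^\alpha b')$. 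On the other side $m\cdot((a\otimes b)(a'\otimes b'))=m\cdot(aa'{}_\alpha\otimes b^\alpha b')=(m(aa'{}_\alpha))\triangleleft(b^\alpha b')$, which is the same expression. Note this direction uses only the semi--entwining axioms together with the fact that $A\otimes B$ is an algebra.

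For the converse, given a right $A\otimes B$-module $M$ I would set $ma:=m(a\otimes 1_B)$ and $m\triangleleft b:=m(1_A\otimes b)$, and first record three elementary identities in $A\otimes B$: first, $(a\otimes 1_B)(a'\otimes 1_B)=aa'\otimes 1_B$, which uses the factorization axiom $\psi(1_B\otimes a')=a'\otimes 1_B$; second, $(1_A\otimes b)(1_A\otimes b')=1_A\otimes bb'$, which uses the semi--entwining axiom $\psi(b\otimes 1_A)=1_A\otimes b$; and third, $(1_A\otimes b)(a\otimes 1_B)=a_\alpha\otimes b^\alpha=\psi(b\otimes a)$ and $(a_\alpha\otimes 1_B)(1_A\otimes b^\alpha)=a_\alpha\otimes b^\alpha$, the latter using $\psi(1_B\otimes 1_A)=1_A\otimes 1_B$ (a special case of either extra axiom). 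The first identity makes $M$ a right $A$-module, the second makes $\triangleleft$ a right $B$-module structure, and the third yields $(m\triangleleft b)a=m\,\psi(b\otimes a)=ma_\alpha\triangleleft b^\alpha$, i.e., the semi--entwined compatibility. One may also check that the two passages are mutually inverse: in one direction $m(a\otimes 1_B)=(ma)\triangleleft 1_B=ma$ and $m(1_A\otimes b)=(m1_A)\triangleleft b=m\triangleleft b$, and in the other $(m(a\otimes 1_B))\triangleleft b=m((a\otimes 1_B)(1_A\otimes b))=m(a\otimes b)$ since $(a\otimes 1_B)(1_A\otimes b)=a\otimes b$ (again by $\psi(1_B\otimes 1_A)=1_A\otimes 1_B$).

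I do not expect a genuine obstacle: the argument is a bookkeeping exercise with the Sweedler-type indices of $\psi$. The one place deserving attention is the converse direction, where one must genuinely invoke the factorization half of the axioms (through $\psi(1_B\otimes a)=a\otimes 1_B$, whence $\psi(1_B\otimes 1_A)=1_A\otimes 1_B$); without it the prospective $A$-action $ma=m(a\otimes 1_B)$ need not even be associative, so the statement really does require $\psi$ to be an algebra factorization and not merely a semi--entwining map.
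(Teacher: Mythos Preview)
Your proof is correct and follows essentially the same direct-verification approach as the paper; the paper merely compresses the forward direction to the single key crossing identity $m((1\otimes b)(a\otimes 1))=(m(1\otimes b))(a\otimes 1)$ (both sides equal $ma_\alpha\triangleleft b^\alpha$) and leaves the rest to the reader, whereas you expand the full associativity check and spell out the converse carefully. Your observation about exactly where the factorization half of the axioms enters is a useful addition not made explicit in the paper.
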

\begin{proof}
It is enough to verify that the definition of $A\otimes B$ action agrees with the
algebra relations, i.e., that
$$
m((1\otimes b)(a\otimes 1))=(m(1\otimes b))(a\otimes b)
$$
Both sides of the above equation equal $ma_\alpha\triangleleft b^\alpha$ -- left one because
of algebra relations, and the right one because $M$ is a $(A,B,\psi)$ semi--entwined module.
We prove similarly the rest of the lemma. 
\end{proof}

\bigskip

\section{ Applications}

\bigskip

\subsection{Intertwining Operators}

\bigskip

We give a brief introduction to the intertwining operators below.

Let $A$ be an $R$-algebra.
Given two algebra representations, say
$ \rho: V \otimes A \rightarrow V$
and
$ \rho': V' \otimes A \rightarrow  V'$,
we define an { \bf intertwining operator} $ f:V \rightarrow V'$
to be a linear operator such that
$f \circ \rho = \rho' \circ (f \otimes I)$.

With this definition we can define the category of finite dimensional representations of $A$, in which the morphisms are intertwining operators
(see \cite{bl}).

\bigskip

The following theorem provides a connection between semi-entwining structures
and intertwining operators.

\bigskip

\begin{thm}\label{int}

Let $A$ be an $R$-algebra, let $B$ be an $R$-module, and let
$\psi:B\otimes A\rightarrow A\otimes B$ be a semi--entwining map.
Then, the following statements are true:

(i) $B\otimes A $ is a right $A$-module in a trivial way,
with the right action $ \rho: (B\otimes A)\otimes A \rightarrow (B\otimes A), \
\ \ (b\otimes a)\otimes a' \mapsto b \otimes aa'$.

(ii) $A\otimes B$ is a right $A$-module in the following way:
$ \rho': (A\otimes B)\otimes A \rightarrow (A\otimes B), \
\ \ (a\otimes b)\otimes a' \mapsto a a'_{\alpha} \otimes b^{\alpha}$.

(iii) With the above actions, $\psi:B\otimes A\rightarrow A\otimes B$
is an intertwining operator (i.e. $\psi$ satisfies the relation
$\psi \circ \rho = \rho' \circ (\psi \otimes I)$).

\end{thm}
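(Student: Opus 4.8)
The plan is to verify the three claims by direct computation, the only substantive ingredient being the multiplicativity axiom \eqref{semi} in the definition of a semi--entwining map.

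For (i), I would note that $\rho$ is nothing but $I_B\otimes m_A$, where $m_A\colon A\otimes A\to A$ is the multiplication of $A$. Since $A$ is an associative, unital $R$-algebra, the associativity and unitality of $\rho$ are immediate, so $B\otimes A$ is a right $A$-module.

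For (ii), the quickest route is to observe that $\rho'$ is precisely the action $*$ introduced in Lemma~\ref{algfact}(i), so the assertion is already contained there. For completeness one may also check it by hand: unitality holds since $(a\otimes b)*1_A = a(1_A)_\alpha\otimes b^\alpha = a\otimes b$ by the first axiom $\psi(b\otimes 1_A)=1_A\otimes b$; and associativity, $((a\otimes b)*a')*a'' = aa'_\alpha a''_\beta\otimes b^{\alpha\beta} = (a\otimes b)*(a'a'')$, follows from \eqref{semi} once one recognizes $(a'a'')_\gamma\otimes b^\gamma = a'_\alpha a''_\beta\otimes b^{\alpha\beta}$.

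For (iii), I would evaluate both sides of $\psi\circ\rho = \rho'\circ(\psi\otimes I)$ on a generator $(b\otimes a)\otimes a'$. The left-hand side is $\psi(b\otimes aa') = a_\alpha a'_\beta\otimes b^{\alpha\beta}$ by \eqref{semi}. The right-hand side is $\rho'\bigl((a_\alpha\otimes b^\alpha)\otimes a'\bigr) = a_\alpha a'_\beta\otimes (b^\alpha)^\beta = a_\alpha a'_\beta\otimes b^{\alpha\beta}$. These coincide, which is the desired intertwining identity. I do not expect a genuine obstacle here; the only point requiring care — and thus the ``hard part'', such as it is — is the bookkeeping of the Sweedler-like labels when $\psi$ is applied iteratively, i.e.\ being consistent that $\psi(b^\alpha\otimes a') = a'_\beta\otimes b^{\alpha\beta}$ so that the indices match those produced by the single application afforded by \eqref{semi}.
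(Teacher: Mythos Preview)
Your proof is correct and follows essentially the same route as the paper's own argument: (i) is direct, (ii) is deduced from Lemma~\ref{algfact}(i), and (iii) amounts to the observation that the intertwining relation is equivalent to the multiplicativity axiom~\eqref{semi}. The paper simply states these facts without the explicit computations you provide, so your version is a fuller write-up of the same proof.
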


{\bf Proof.} The proof of (i) is direct and (ii) follows from Lemma 2.3(i).
The relation $\psi \circ \rho = \rho' \circ (\psi \otimes I)$ is equivalent
to the second relation of (3).
\qed

\bigskip

\subsection{Braided Algebras}

Many algebras obtained by
quantization are commutative braided algebras, and
all super-commutative algebras are automatically
commutative braided algebras (see \cite{bb}).

\begin{dfn}
An algebra $ (A, M, u)$ for which there exists a Yang-Baxter operator

$ \psi : A \otimes A \rightarrow  A \otimes A $ such that
$ \psi( a \otimes 1)= 1 \otimes a $,  $ \ \ \psi( 1 \otimes b)= b \otimes 1 $,\\
$ \psi( a \otimes bc)= (M \otimes I) \circ ( I \otimes \psi) \circ
(\psi \otimes I)
( a \otimes b \otimes c) $  and

$ \psi( ab \otimes c)=
 (I \otimes M) \circ (\psi \otimes I) \circ (I \otimes \psi)
( a \otimes b \otimes c)$
$ \ \ \ \forall a,b,c \in A$

is called a braided algebra.

Moreover, if
$ M \circ \psi( a \otimes b)= M (a \otimes b) \ \  \forall a,b \in A$,
we call $ (A, M, u, \psi )$
a commutative
braided algebra or an r-commutative algebra (see \cite{b}).
\end{dfn}

\begin{dfn}
Given braided algebras $ (A, M, u, \psi )$ and  $ (B, M, u, \psi' )$,
we say that $ f : A \rightarrow B$ is a braided algebra morphism if it is
a morphism of algebras and $ ( f \otimes f) \circ \psi = \psi' \circ ( f \otimes f)$ (see \cite{b}).
\end{dfn}

\begin{thm}
(i) Any algebra $ (A, M, u)$ becomes a commutative
braided algebra
$ (A, M, u, \psi )$ with
$ \psi( a \otimes b)=\psi^A( a \otimes b)= 1 \otimes ab + ab \otimes 1 - a \otimes b$.

(ii) If $ (A, M, u, \psi^A )$ and  $ (B, M, u, \psi^B )$ are two braided
algebras as in (i), and $ f : A \rightarrow B$ is an algebra morphism, then it is also a
braided algebra morphism.

(iii) If $ \delta : A \rightarrow A$ is a derivation (i.e., $ \delta(ab) = \delta(a)b+
a \delta(b)  \  $ and $ \delta(1)=0$), then there exists a morphism of braided
algebras 

$ f: (A, M, u, \psi^A )  \rightarrow  (A \oplus A, m, \eta, \psi^{A \oplus A }),\ \  a \mapsto a \oplus \delta(a) $, 

where 
$ m( (a \oplus b) \otimes (a' \oplus b')) = (aa') \oplus (ab'+ba')$ and
$1_{A \oplus A} = 1_A \oplus 0_A$.

\end{thm}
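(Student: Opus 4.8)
The plan is to verify each of the three claims by direct computation, treating the braided-algebra axioms from the preceding definition as a checklist.

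For (i), I would first recall from Theorem 3.11(i) (the statement "Any algebra $(A,M,u)$ becomes a commutative braided algebra $(A,M,u,\psi^A)$") that $\psi^A$ is already known to be a Yang--Baxter operator satisfying all four structural relations together with $M\circ\psi^A = M$; so strictly (i) is just a restatement and nothing new needs proving. If a self-contained check is wanted, I would plug $\psi^A(a\otimes b)=1\otimes ab+ab\otimes 1-a\otimes b$ into the two unit conditions (immediate since $1\cdot a = a\cdot 1 = a$), then into the two multiplicativity conditions $\psi(a\otimes bc)=(M\otimes I)(I\otimes\psi)(\psi\otimes I)(a\otimes b\otimes c)$ and its mirror, expanding both sides as sums of six terms and matching; and finally note $M\psi^A(a\otimes b)=ab+ab-ab=ab=M(a\otimes b)$, giving $r$-commutativity. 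I expect this to be routine bookkeeping.

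For (ii), the task is to show that an algebra morphism $f:A\to B$ automatically satisfies $(f\otimes f)\circ\psi^A=\psi^B\circ(f\otimes f)$. I would simply compute: $(f\otimes f)\psi^A(a\otimes b)=(f\otimes f)(1_A\otimes ab+ab\otimes 1_A-a\otimes b)=1_B\otimes f(a)f(b)+f(a)f(b)\otimes 1_B-f(a)\otimes f(b)$, using $f(1_A)=1_B$ and $f(ab)=f(a)f(b)$; on the other hand $\psi^B(f(a)\otimes f(b))$ equals exactly the same expression by definition of $\psi^B$. Hence $f$ is a braided algebra morphism. This is the easiest part.

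For (iii), with $\delta$ a derivation, I would define $f(a)=a\oplus\delta(a)$ and check three things. First, $f$ is an algebra morphism for the multiplication $m((a\oplus b)\otimes(a'\oplus b'))=(aa')\oplus(ab'+ba')$ on $A\oplus A$: indeed $f(a)m f(a')=(aa')\oplus(a\delta(a')+\delta(a)a')=(aa')\oplus\delta(aa')=f(aa')$ by the Leibniz rule, and $f(1_A)=1_A\oplus\delta(1_A)=1_A\oplus 0_A=1_{A\oplus A}$. Second, I must confirm that $(A\oplus A,m,\eta,\psi^{A\oplus A})$ is genuinely a braided algebra of the form in (i) — here I would invoke (i) applied to the algebra $(A\oplus A,m,\eta)$, so that $\psi^{A\oplus A}$ is the canonical operator $X\otimes Y\mapsto 1\otimes XY+XY\otimes 1-X\otimes Y$ for $X,Y\in A\oplus A$. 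Third, the braided-morphism condition $(f\otimes f)\circ\psi^A=\psi^{A\oplus A}\circ(f\otimes f)$, which follows instantly from (ii) once we know $f$ is an algebra morphism and both operators have the canonical form.

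The only genuine subtlety — the step I would flag as the main obstacle — is checking that $m$ really is an associative unital multiplication on $A\oplus A$ (the "square-zero extension" / trivial-derivation-module construction), since without that (i) cannot be applied to it; this requires verifying associativity $m(m\otimes I)=m(I\otimes m)$, which reduces to $a(a'b''+b'a'')+b(a'a'')=(aa')b''+(ab'+ba')a''$, i.e. a short associativity-plus-module-axiom computation in $A$. Everything downstream is then a mechanical consequence of parts (i) and (ii).
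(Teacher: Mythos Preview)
Your overall strategy---direct verification of the braided-algebra axioms for (i), a one-line computation for (ii), and reduction of (iii) to (ii) after checking that $f$ is an algebra map and that $(A\oplus A,m,\eta)$ is an algebra---matches the paper's approach exactly; in fact your treatment of (ii) and (iii) is more explicit than the paper's (which cites an external reference for (ii) and leaves (iii) to the reader).

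There is one genuine issue in your handling of (i). Your opening move, ``recall from Theorem 3.11(i) \ldots\ that $\psi^A$ is already known to be a Yang--Baxter operator,'' is circular: that \emph{is} the statement you are asked to prove. When you then sketch a self-contained check, you list the unit conditions, the two multiplicativity conditions, and $r$-commutativity, but you never verify that $\psi^A$ is an invertible solution of the braid equation---and that is part of the definition of a braided algebra. The paper deals with this by noting that $\psi^A$ is a self-inverse Yang--Baxter operator already established in the literature (it is the map $\gamma_1$ of Remark~2.2(1), shown to be a Yang--Baxter operator in \cite{florin,nic}); you should either cite that, or add the braid-equation and invertibility check to your list. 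The paper also shortcuts one of the multiplicativity identities by observing it is precisely the semi--entwining condition for $\gamma_1$ (Remark~2.2(1) with $q=1$), which you could use in place of brute expansion.
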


{\bf Proof.}

(i) Notice that
$ \psi( a \otimes b)= 1 \otimes ab + ab \otimes 1 - a \otimes b$ is a self-inverse Yang-Baxter operator which was studied in \cite{nic, n}.

$ \psi( a \otimes 1)= 1 \otimes a $,  $ \ \ \psi( 1 \otimes b)= b \otimes 1 $
(directly)

$ \psi( a \otimes bc)= (M \otimes I) \circ ( I \otimes \psi) \circ
(\psi \otimes I)
( a \otimes b \otimes c) $  (from Remark 2.2 (i) with q=1)

$ \psi( ab \otimes c)= 1 \otimes abc + abc \otimes 1 - ab \otimes c =
 (I \otimes M) \circ (\psi \otimes I) \circ (I \otimes \psi)
( a \otimes b \otimes c)=  (I \otimes M) \circ (\psi \otimes I)(
a \otimes 1 \otimes bc + a \otimes bc \otimes 1 - a \otimes b \otimes c)=
(I \otimes M) ( 1 \otimes a \otimes bc + abc \otimes 1 \otimes 1 +
1 \otimes abc \otimes 1 -
 a \otimes bc \otimes 1 -
 1 \otimes ab \otimes c -  ab \otimes 1 \otimes c + a \otimes b \otimes c)=
1 \otimes abc + abc \otimes 1 +
1 \otimes abc \otimes 1 -
 a \otimes bc -
 1 \otimes abc -  ab \otimes c + a \otimes bc =
abc \otimes 1 -
 1 \otimes abc -  ab \otimes c$

$ M \circ \psi( a \otimes b)= 1 \otimes ab + ab \otimes 1 - a \otimes b =
ab = M (a \otimes b) $.

(ii) This follows from Proposition 3.1 of \cite{florin}. Also, refer to \cite{n}.

(iii) The proof is direct and is left to the reader.
\qed

\begin{rem} In
the above example $ \psi \circ \psi = I \otimes I $; so, the above algebra is
``strong''. All sorts of non-commutative analogs of manifolds are
commutative braided algebras:
quantum groups, non-commutative tori, quantum vector spaces,
the Weyl and Clifford algebras, certain universal enveloping algebras, super-manifolds, and so forth. It seems that the ones with direct relevance to quantum theory in 4 dimensions are ``strong,'' while the non-strong ones, like quantum groups, are primarily relevant to 2- and 3-dimensional physics (see \cite{bb}).

\end{rem}

\bigskip

\subsection{Liftings of Functors}

The semi-entwining structures can be understood as liftings of functors
from one category to another.
This goes back as far back as \cite{peter}. This situation is reviewed in
\cite{wisb}: the semi-entwining case is dealt with in general in item
3.3 (which is transfered from  \cite{peter}); how this general case is 
translated to our situation is clear from the discussion in item 5.8 of
\cite{wisb}. This is also presented in subsection 3.1 of \cite{wisbauer},
where the axioms of semi-entwining structures are given by formula (3.1). 

We give a general definition of liftings of functors. $F$ is a lifting of $G$ if the following diagram commutes:

\begin{picture}(100,100)(10,10)
\put(105,80){$ \mathfrak{C}  $ }
\put(105,10){$ \mathfrak{A} $ }
\put(111,74){\vector(0,-1){53}}
\put(245,80){$  \mathfrak{D} $ }
\put(245,10){$  \mathfrak{B} $ }
\put(250,74){\vector(0,-1){53}}
\put(128,86){\vector(1,0){104}}
\put(137,16){\vector(1,0){87}}
\put(172,92){$ F $}
\put(95,42){ $ U $ }
\put(248,42){ $ U' $ }
\put(174,22){$ G  $}
\end{picture}

where $U$ and $U'$ are forgetful functors.

We now present examples of liftings of functors related to
semi-entwining structures.

\begin{thm}\label{lift} Let $A$ be an $R$-algebra, and let $B$ be an $R$-module.
The functor $ \  - \otimes B$ can be
lifted from the category of $ R$-modules to
the category of right $A$-modules $\iff$ there exists a $R$-linear map
$\psi:B\otimes A\rightarrow A\otimes B$ which is a semi--entwining map.

\end{thm}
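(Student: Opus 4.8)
The statement is an equivalence, so I would prove the two implications separately, and the natural bridge in both directions is the datum of a natural transformation witnessing the lifting. Recall that lifting $-\otimes B$ from $\mathrm{Mod}\text{-}R$ to $\mathrm{Mod}\text{-}A$ means producing a functor $\widetilde{(-\otimes B)}\colon \mathrm{Mod}\text{-}A\to\mathrm{Mod}\text{-}A$ with $U'\circ\widetilde{(-\otimes B)} = (-\otimes B)\circ U$, where $U,U'$ are the forgetful functors. In particular, applying this to the free module $A$ itself (regarded as a right $A$-module over itself), the lifted functor must equip $A\otimes B$ with a right $A$-action whose underlying $R$-module is $A\otimes B$.

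\textbf{($\Leftarrow$).} Given a semi--entwining map $\psi\colon B\otimes A\to A\otimes B$, I would define the lifted functor on objects by sending a right $A$-module $M$ to $M\otimes B$ with the action
\begin{equation*}
(m\otimes b)\triangleleft a = m a_\alpha\otimes b^\alpha,
\end{equation*}
using the Sweedler-like notation $\psi(b\otimes a)=a_\alpha\otimes b^\alpha$ of Definition~\ref{semientdef}. One checks this is a right action: unitality follows from $\psi(b\otimes 1_A)=1_A\otimes b$, and associativity follows from the multiplicativity axiom $\psi(b\otimes aa')=a_\alpha {a'}_\beta\otimes b^{\alpha\beta}$ (this is, up to notation, exactly statement~(i) of Lemma~\ref{algfact} applied with $M$ in place of $A$; more precisely it is the same computation). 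On morphisms, an $A$-linear map $f\colon M\to M'$ is sent to $f\otimes I_B$, which is automatically $A$-linear for the new actions since the action formula only modifies the $B$-factor through $\psi$ and does not touch $f$. Finally $U'\circ\widetilde{(-\otimes B)} = (-\otimes B)\circ U$ holds on the nose because the underlying $R$-module of $M\otimes B$ with the new action is literally $M\otimes B$. So $-\otimes B$ lifts.

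\textbf{($\Rightarrow$).} Conversely, suppose $\widetilde{(-\otimes B)}$ is a lifting. Evaluate it at the right $A$-module $A$: this yields a right $A$-action on the $R$-module $A\otimes B$. Define
\begin{equation*}
\psi\colon B\otimes A\to A\otimes B,\qquad \psi(b\otimes a) = (1_A\otimes b)\triangleleft a,
\end{equation*}
where $\triangleleft$ is this action. I would then verify the two semi--entwining axioms. The unit axiom $\psi(b\otimes 1_A)=1_A\otimes b$ is immediate from unitality of $\triangleleft$. For the multiplicativity axiom, I would use naturality: the right-multiplication map $r_{a'}\colon A\to A$, $x\mapsto xa'$, is right $A$-linear, so $\widetilde{(-\otimes B)}(r_{a'}) = r_{a'}\otimes I_B$ is $A$-linear for $\triangleleft$; combining this with associativity of $\triangleleft$ and writing everything out yields $\psi(b\otimes aa') = a_\alpha a'_\beta \otimes b^{\alpha\beta}$ in the Sweedler notation. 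The key point making this work is that every element of $A$ arises as $r_a(1_A)$, so the action on a general free module is forced by naturality to be given by the formula above with $\psi$.

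\textbf{Main obstacle.} The genuinely delicate step is the $(\Rightarrow)$ direction: one has to argue that the abstractly given lifting is \emph{determined} by its value at the one object $A$, and in particular that the induced $\psi$ really does reproduce the lifted action on every $M$ (not just on $A$). This uses that $M\otimes B$ is a quotient/colimit of copies of $A\otimes B$ and that both the lifted action and the $\psi$-twisted action are $R$-bilinear and natural, so they must agree; care is needed to phrase this cleanly, e.g.\ by testing on elements $m\otimes b$ and using the $A$-linear map $A\to M$, $a\mapsto ma$, together with naturality of the lifting. Once that identification is in place, checking the two axioms of Definition~\ref{semientdef} is a routine unwinding of unitality and associativity of the action, exactly mirroring the forward direction. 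The final sentence about $F\circ G\simeq A\otimes-$ is not needed here; I would also remark that the construction is compatible with the functors $F,G$ of Remark~\ref{eq}, but this is a side comment rather than part of the proof.
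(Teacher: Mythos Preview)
Your approach is essentially the same as the paper's, but there is a concrete error in the $(\Rightarrow)$ direction. You claim that the right-multiplication map $r_{a'}\colon A\to A$, $x\mapsto xa'$, is right $A$-linear; it is not, since $r_{a'}(xa)=xaa'$ while $r_{a'}(x)\cdot a=xa'a$, and these differ whenever $A$ is noncommutative. The paper instead uses \emph{left} multiplication $\ell_a\colon A\to A$, $x\mapsto ax$, which \emph{is} right $A$-linear. Naturality of the lifting then forces
\[
(a\otimes b)\triangleleft a' \;=\; (\ell_a\otimes I_B)\bigl((1\otimes b)\triangleleft a'\bigr) \;=\; a\,a'_\alpha\otimes b^\alpha,
\]
and combining this with associativity of $\triangleleft$ gives
\[
\psi(b\otimes aa') \;=\; (1\otimes b)\triangleleft(aa') \;=\; \bigl((1\otimes b)\triangleleft a\bigr)\triangleleft a' \;=\; (a_\alpha\otimes b^\alpha)\triangleleft a' \;=\; a_\alpha a'_\beta\otimes b^{\alpha\beta},
\]
which is the multiplicativity axiom. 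So the fix is simply to swap $r_{a'}$ for $\ell_a$.

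A secondary remark: the ``main obstacle'' you raise---that the abstractly given lifting must be determined by its value at $A$, and that the induced $\psi$ must reproduce the lifted action on every $M$---is not actually needed for the theorem as stated. The statement only asserts the existence of a semi-entwining map given the existence of a lifting; it does not claim the two constructions are mutually inverse. Once $\psi(b\otimes a)=(1_A\otimes b)\triangleleft a$ is defined from the action on $A\otimes B$ alone, verifying the two axioms of Definition~\ref{semientdef} (via unitality, associativity, and naturality with respect to left multiplications) completes the proof; no comparison with the action on a general $M$ is required.
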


{\bf Proof.} Assume that there exists a semi--entwining
$\psi:B\otimes A\rightarrow A\otimes B$, then $-\otimes B$ lifts to a functor which associates to
a right $A$-module
$M$ the $A$-module $M\otimes B$ with  a right $A$ action given by
$$
(m\otimes b)a:=ma_\alpha\otimes b^\alpha.
$$
It remains to check that for any
 right $A$-module function $f:M\rightarrow M'$,
the map $f\otimes\id:M\otimes B\rightarrow M'\otimes B$ is a right $A$-module map:
$$
(f\otimes\id)(m\otimes b)a=
(f(m)\otimes b)a=f(m)a_\alpha\otimes b^\alpha=f(ma_\alpha)\otimes b^\alpha
=(f\otimes\id)((m\otimes b)a).
$$
On the other hand, suppose that $-\otimes B$ lifts to a functor in the category of right $A$-modules.
In particular, it follows that $A\otimes B$ is a right $A$-module. Define the linear map
$$
\Psi:B\otimes A\rightarrow A\otimes B,\quad b\otimes a\mapsto a_\alpha\otimes b^\alpha
$$
by the formula
$$
\Psi(b\otimes a):=(1\otimes b)a.
$$
We shall prove that this is a semi--entwining map. Indeed, by definition we have
$$\Psi(b\otimes 1)=1\otimes b.$$ Any element $a\in A$ defines  a right $A$-module map
$$f:A\rightarrow A,\quad a'\mapsto aa'.$$
It follows that for any $a'\in A$ we have from the $A$-linearity of $f\otimes\id$:
$$
(a\otimes b)a'=(f(1)\otimes b)a'=(f\otimes\id)((1\otimes b)a')=f(a'{}_\alpha)\otimes b^\alpha=aa'{}_\alpha\otimes
b^\alpha.
$$
Hence
$ \ \
(aa')_\alpha\otimes b^\alpha=(1\otimes b)(aa')
=(a_\alpha\otimes b^\alpha)a'=a_\alpha a'{}_\beta\otimes b^{\alpha\beta}. \ \
$
\qed

\begin{rem}
Let $A$ be an $R$-algebra and let $B$ be an $R$-module.
Using our terminology (given in Remark 2.6) and the results of \cite{wisbauer},
we conclude that the category of semi-entwining structures over $A$ is isomorphic to
the category of lifting of functors from the  
category of $ R$-modules to
the category of right $A$-modules.
\end{rem}

\bigskip

\begin{rem} We now give a more general
definition than that given in Remark \ref{eq}.

We define the category of semi--entwining structures, 
whose objects are
triples $(B, \  A,\  \phi) $, and morphisms are pairs 
$ (f,\ g): (B, \  A,\  \phi)
\rightarrow (B', \  A',\  \phi')$ 
where $ f: B \rightarrow B'$ is an
$R$-linear map, $ g: A \rightarrow A'$ is an algebra morphism,
and they satisfy
the relation
$ (g \otimes f) \circ \phi = \phi' \circ (f \otimes g)$.

In a dual manner, let us define
the category of cosemi--entwining structures,
whose objects are
triples $(D, \  C,\  \phi) $,
and morphisms are pairs 
$ (f,\ g): (D, \  C,\  \phi)
\rightarrow (D', \  C',\  \phi')$ 
where $ f: D \rightarrow D'$ is an
$R$-linear map, $ g: C \rightarrow C'$ is a coalgebra morphism,
and they satisfy
the relation
$ (g \otimes f) \circ \phi = \phi' \circ (f \otimes g)$.

The duality functor from the category of
coalgebras to the category of algebras
can be lifted 
to a functor from the category
of cosemi-entwining structures
to the 
category
of semi-entwining structures (by Lemma 2.11). 

This fact is described in the following diagram.

\begin{picture}(100,100)(10,10)
\put(10,80){$ \bf Cosemi-entw \ str  $ }
\put(70,10){$ \bf  \ k-coalg $ }
\put(111,74){\vector(0,-1){53}}
\put(240,80){$ \bf { Semi-entwining \ str } $ }
\put(230,10){$ \bf { \ k-alg} $ }
\put(250,74){\vector(0,-1){53}}
\put(128,86){\vector(1,0){104}}
\put(137,16){\vector(1,0){87}}
\put(172,92){$ { ( \ ) }^* $}
\put(95,42){ $ U $ }
\put(248,42){ $ U $ }
\put(174,22){$ { ( \ ) }^*   $}
\end{picture}

\end{rem}

\bigskip

\begin{rem} A braided coalgebra is a structure dual to 
Definition 3.2 (see, e.g. \cite{malte}). 
 
The duality between finite-dimensional 
algebras and finite-dimensional coalgebras can be lifted 
to a duality between the categories
of  finite-dimensional braided
algebras and  finite-dimensional braided coalgebras. 
This fact is described in the following diagram.

\begin{picture}(100,100)(10,10)
\put(30,80){$ \bf f.d. \ braided \  alg $ }
\put(70,10){$ \bf f.d. \ k-alg $ }
\put(111,74){\vector(0,-1){53}}
\put(240,80){$ \bf {f.d. \ braided \  coalg } $ }
\put(230,10){$ \bf {f.d. \ k-coalg} $ }
\put(250,74){\vector(0,-1){53}}
\put(128,86){\vector(1,0){104}}
\put(232,80){\vector(-1,0){104}}
\put(137,16){\vector(1,0){87}}
\put(223,10){\vector(-1,0){87}}
\put(172,92){$ { ( \ ) }^* $}
\put(172,70){$ { ( \ ) }^* $}
\put(95,42){ $ U $ }
\put(248,42){ $ U $ }
\put(174,22){$ { ( \ ) }^*   $}
\put(174,0){$ { ( \ ) }^*   $}
\end{picture}

\end{rem}

\bigskip

\subsection{Tambara bialgebras}

\begin{dfn}
{\bf [Tambara Bialgebra (\cite{Tambara})]}\label{tambdef}
Let $A$ be a finitely generated and projective $R$-algebra
(which implies that $A^*$ is a coalgebra), and let $a_i$,
$a^*_i$, $i=1,\ldots, N$ be a dual basis of $A$. Let $I\subset T(A^*\otimes A)$
be an ideal generated by elements
\begin{gather*}
a^*(1_A)-a^*\otimes 1_A,\\
a^*\otimes aa'-a^*\sw{1}\otimes a\otimes a^*\sw{2}\otimes a',
\end{gather*}
for all $a\in A$, $a^*\in A^*$.
Then $H(A)=T(A^*\otimes A)/I$ is called a Tambara bialgebra. Denoting by $[a^*\otimes a]$ the
class of $a\otimes a^*$ in $H(A)$,
the comultiplication $\Delta$ and counit $\varepsilon$ is given by
\begin{gather*}
\Delta([a^*\otimes a])=[\sum_i a^*\otimes a_i]\otimes [a^*_i\otimes a],\\
\varepsilon([a^*\otimes a])=a^*(a).
\end{gather*}
$A$ is a right $H(A)$-comodule algebra with coaction
\begin{gather*}
\varrho(a)=\sum_ia_i\otimes [a^*_i\otimes a].
\end{gather*}
\end{dfn}

\begin{thm}{\bf (\cite{Tambara})} \label{Tamb}
Suppose that $A$ is a finitely generated projective $R$-algebra and $B$ an $R$-module. Then
semi--entwining structures $\psi:B\otimes A\rightarrow A\otimes B$ are in one
to one correspondence with right $H(A)$-module structures on $B$. Similarly,
if $B$ is an algebra then algebra factorizations are in one to one correspondence
with right $H(A)$-module algebra structures on $B$.
Finally if $B$ is a coalgebra, then entwining structures $\psi:B\otimes A\rightarrow A\otimes B$ are in one to one correspondence with
right $H(A)$-module coalgebra structures on $B$.
 Explicitly, given
right $H(A)$-module structure on $B$, we define $\psi=\psi_{H(A)}$ (eq. \ref{entfor}).
Conversely, given a semi--entwining $\psi:B\otimes A\rightarrow A\otimes B$, we define
a right $H(A)$ module action on $B$ by
\begin{equation}
b[a^*\otimes a]=a^*(a_\alpha)b^\alpha.
\end{equation}
\end{thm}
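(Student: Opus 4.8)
The plan is to establish the bijection by exhibiting two mutually inverse constructions and then checking that the extra axioms (algebra factorization, coalgebra of $B$) correspond exactly to the extra structure (module algebra, module coalgebra) on the $H(A)$ side. First I would take a right $H(A)$-module structure on $B$ and verify that $\psi_{H(A)}$ defined via (\ref{entfor}), using the comodule algebra coaction $\varrho(a)=\sum_i a_i\otimes[a^*_i\otimes a]$ of Definition \ref{tambdef}, is a semi--entwining map. This is essentially the first assertion of the Remark following Lemma \ref{algfact}: one needs $A$ to be a right $H(A)$-comodule algebra, which is part of Definition \ref{tambdef}, and $B$ to be a right $H(A)$-module, which is the hypothesis; the two semi--entwining axioms then follow from the counit and multiplicativity properties of the coaction. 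Conversely, given a semi--entwining $\psi$, I would define $b[a^*\otimes a]=a^*(a_\alpha)b^\alpha$ and check this is well defined, i.e. that it respects the defining relations of the ideal $I\subset T(A^*\otimes A)$: the relation $a^*(1_A)-a^*\otimes 1_A$ forces $b\mapsto\varepsilon$-type consistency, which is exactly the first semi--entwining axiom $\psi(b\otimes 1_A)=1_A\otimes b$; the relation $a^*\otimes aa'-a^*\sw{1}\otimes a\otimes a^*\sw{2}\otimes a'$ corresponds, after applying $a^*_1,a^*_2$ against $a_\alpha,a'_\beta$, precisely to the second semi--entwining axiom $\psi(b\otimes aa')=a_\alpha a'_\beta\otimes b^{\alpha\beta}$. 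One must also check this action is unital, which uses that the counit of $H(A)$ is $\varepsilon([a^*\otimes a])=a^*(a)$ together with the dual basis identity $\sum_i a^*(a_i)a_i=$ (the relevant expansion).

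Next I would verify that these two assignments are inverse to each other. Starting from a module structure, forming $\psi_{H(A)}$, and then re-extracting the action, one computes $b[a^*\otimes a]=a^*((a_\alpha))b^\alpha$ where $\psi_{H(A)}(b\otimes a)=a\sw{0}\otimes ba\sw{1}=\sum_i a_i\otimes b[a^*_i\otimes a]$, so this recovers $\sum_i a^*(a_i)\,b[a^*_i\otimes a]=b[a^*\otimes a]$ by the dual basis property. Starting from a semi--entwining $\psi$, passing to the action, and forming $\psi_{H(A)}$ again, one gets $b\otimes a\mapsto\sum_i a_i\otimes b[a^*_i\otimes a]=\sum_i a_i\otimes a^*_i(a_\alpha)b^\alpha=a_\alpha\otimes b^\alpha=\psi(b\otimes a)$, again by the dual basis property. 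This part is routine once the dual basis identity $x=\sum_i a^*_i(x)a_i$ (equivalently $x^*=\sum_i x^*(a_i)a^*_i$) is used systematically.

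For the refinements: if $B$ is an algebra, I would show the algebra factorization axioms $\psi(1_B\otimes a)=a\otimes 1_B$ and $\psi(bb'\otimes a)=a_{\alpha\beta}\otimes b^\beta b'^\alpha$ translate into $1_B[a^*\otimes a]=a^*(a)1_B$ and $(bb')[a^*\otimes a]=\sum b[a^*\sw{1}\otimes a\sw{?}]\,b'[\ldots]$ — i.e. that the action makes $B$ an $H(A)$-module algebra — using the formula for $\Delta$ and the multiplicativity of the coaction of $A$; and conversely. Dually, if $B$ is a coalgebra, the two entwining axioms $a_\alpha\varepsilon(b^\alpha)=a\varepsilon(b)$ and $a_\alpha\otimes b^\alpha\sw1\otimes b^\alpha\sw2=a_{\alpha\beta}\otimes b\sw1{}^\beta\otimes b\sw2{}^\alpha$ correspond to $\varepsilon_B$ and $\Delta_B$ being $H(A)$-module maps.

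The main obstacle is the verification that the action $b[a^*\otimes a]=a^*(a_\alpha)b^\alpha$ is well defined on the quotient $T(A^*\otimes A)/I$, and in particular that it extends multiplicatively: one must confirm that composing actions of two generators $[a^*\otimes a]$ and $[c^*\otimes c]$ reproduces exactly what the multiplicativity axiom (\ref{semi}) of $\psi$ predicts, so that the relation from the ideal is respected consistently for arbitrary words. This is where the iterated use of axiom (\ref{semi}) — i.e. $\psi(b\otimes a_1\cdots a_n)=(a_1)_{\alpha_1}\cdots(a_n)_{\alpha_n}\otimes b^{\alpha_1\cdots\alpha_n}$ — combined with the structure of the ideal $I$ has to be handled carefully; everything else reduces to bookkeeping with the dual basis. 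I would present the generator-level checks in detail and leave the inductive extension to words, together with the module-algebra and module-coalgebra refinements, as routine verifications of the same type.
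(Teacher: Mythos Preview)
The paper does not give its own proof of this theorem: it is stated as a result of Tambara (marked \textbf{(\cite{Tambara})}) and, as the introduction says, ``Theorems 3.11 and 3.13 are mentioned in the context of stating some of our results.'' So there is no in-paper argument to compare against.

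Your sketch is the standard and correct route. The two points that actually need care are exactly the ones you flag: (i) that the prescription $b[a^*\otimes a]=a^*(a_\alpha)b^\alpha$, extended multiplicatively to $T(A^*\otimes A)$, kills the ideal $I$, and (ii) that the two constructions are mutually inverse via the dual basis identities $x=\sum_i a^*_i(x)a_i$ and $x^*=\sum_i x^*(a_i)a^*_i$. Your matching of the two generators of $I$ with the two semi--entwining axioms is correct; for the second relation one should write out explicitly that acting by the word $[a^*\sw{1}\otimes a][a^*\sw{2}\otimes a']$ gives $a^*\sw{1}(a_\alpha)a^*\sw{2}(a'_\beta)b^{\alpha\beta}=a^*(a_\alpha a'_\beta)b^{\alpha\beta}$, which equals $a^*\big((aa')_\gamma\big)b^\gamma$ by (\ref{semi}), i.e. the action of $[a^*\otimes aa']$. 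The unitality of the action uses $\varepsilon([a^*\otimes a])=a^*(a)$ together with the dual basis, as you say. The refinements (algebra factorization $\leftrightarrow$ module algebra, entwining $\leftrightarrow$ module coalgebra) are then formal translations through $\Delta$ and $\varepsilon$ of $H(A)$ and go through without surprises.
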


\begin{rem} Let $q\in R$. The  examples of semi--entwining structures
presented in Remark 2.2 generate the following structures:

\begin{enumerate}

\item
 a right $H(A)$ module action on $A$ by

$b[a^*\otimes a]=a^*(1)ba + q a^*(ba) 1_A - q  a^*(b)a$;

\item
 a right $H(A)$ module action on $A$ by

$b[a^*\otimes a]= q a^*(ba-ab) 1_A + q  a^*(a)b$;

\item
 a right $H(A)$ module action on $M$, for any right A-module $M$, by

$m[a^*\otimes a]=  a^*(1_A)ma $.

\end{enumerate}
\end{rem}

Let $C$ be a finitely generated and projective $R$-coalgebra. Let $c_i$, $c_i^\ast$, $i=1,\ldots,N$ be a dual basis of $C$. Note
that $H(C^\ast)^{\text{cop}}=T(C^\ast\otimes C)/I'$ where $I'\subset T(C^\ast\otimes C)$ is an ideal generated by elements
\begin{gather*}
\varepsilon_C(c)-\varepsilon_C\otimes C, \\
c^\ast\ast d^\ast\otimes c-(c^\ast\otimes c\sw{1})\otimes (d^\ast\otimes c\sw{2}),
\end{gather*}
for all $c^\ast,d^\ast\in C^\ast$, $c\in C$, with explicit coaction and counit given by
\begin{gather*}
\Delta([c^*\otimes c])=[\sum_i c^*\otimes c_i]\otimes [c^*_i\otimes c],\\
\varepsilon([c^*\otimes c])=c^*(c).
\end{gather*}

\begin{thm}{\bf (\cite{Tambara})} \label{Tamb2}
Suppose that  $C$ is a finitely generated projective $R$-coalgebra, and $D$ an $R$-module. Then cosemi--entwining structures
$\psi:D\otimes C\rightarrow C\otimes D$ are in one-to-one correspondence with right $H(C^\ast)^{\text{cop}}$-module structures on $D$. Similarly
if $D$ is a coalgebra, then coalgebra factorizations are in one to one correspondence with $H(C^\ast)^{\text{cop}}$-module coalgebra structures on $D$.
Finally, if $D$ is an algebra, then (right-right) entwining structures $\psi:D\otimes C\rightarrow C\otimes D$ are in one to one
correspondence with right $H(C^\ast)^{\text{cop}}$-module algebra structures on $D$.
Explicitly, given right $H(C^\ast)^{\text{cop}}$-module structures on $D$, we define $\psi=\psi^{H(C^\ast)^{\text{cop}}}$ (eq. ~\ref{altdoi}). Conversely, given a
cosemi--entwining $\psi:D\otimes C\rightarrow C\otimes D$, we define a right $H(C^\ast)^{\text{cop}}$-module structures on $D$ by
\begin{equation}
d[c^*\otimes c]=c^*(d_\alpha)c^\alpha.
\end{equation}
\end{thm}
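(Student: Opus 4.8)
This is the coalgebra-side counterpart of Theorem~\ref{Tamb}, and the plan is to deduce it from that theorem by dualising the $C$-leg. Since $C$ is finitely generated and projective over $R$, the convolution algebra $C^*$ is again finitely generated and projective, the canonical map $C\to C^{**}$ is an isomorphism, and from a dual basis $(c_i,c_i^*)$ of $C$ one obtains a dual basis $(c_i^*,c_i)$ of $C^*$. By the standard lemma above that computes $\psi^{*_C}$ explicitly, the assignment $\psi\mapsto\psi^{*_C}$ sends a cosemi-entwining map $\psi:D\otimes C\to C\otimes D$ to a semi-entwining map $\psi^{*_C}:D\otimes C^*\to C^*\otimes D$ for the algebra $C^*$; using $C\cong C^{**}$ one checks that this operation is involutive, hence that it is a bijection between cosemi-entwining maps on $(D,C)$ and semi-entwining maps on $(D,C^*)$. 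Composing it with the bijection of Theorem~\ref{Tamb} for the algebra $A=C^*$ gives a bijection between cosemi-entwining maps $D\otimes C\to C\otimes D$ and right $H(C^*)$-module structures on $D$.

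\textbf{Identification with $H(C^*)^{\text{cop}}$ and the explicit formulas.} The last step is that passing to the co-opposite of a bialgebra leaves its underlying algebra unchanged, so a right $H(C^*)$-module structure on $D$ is literally the same datum as a right $H(C^*)^{\text{cop}}$-module structure on $D$. Under the presentation $H(C^*)^{\text{cop}}=T(C^*\otimes C)/I'$ recorded just before the theorem this yields the asserted correspondence; here one should check that swapping the two factors inside each generator, $[a^*\otimes a]\mapsto[c^*\otimes c]$, carries $H(C^*)$ onto this presentation up to the $\tau$-twist of the two legs of the coproduct -- which is precisely the co-opposite. To read off the explicit maps I would chase the bijections through all three steps, composing $\psi^{*_C}$ with the formula $b[a^*\otimes a]=a^*(a_\alpha)b^\alpha$ of Theorem~\ref{Tamb} and the relabelling; this should produce the displayed module action $d[c^*\otimes c]=\cdots$ in one direction and $\psi=\psi^{H(C^*)^{\text{cop}}}$ of equation~\ref{altdoi} in the other. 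The latter direction also uses the fact, dual to Definition~\ref{tambdef}, that $C$ is a right $H(C^*)^{\text{cop}}$-comodule coalgebra via $c\mapsto\sum_i c_i\otimes[c^*_i\otimes c]$, which is what makes equation~\ref{altdoi} applicable with $H=H(C^*)^{\text{cop}}$.

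\textbf{The refined statements and the main obstacle.} For ``$D$ a coalgebra'' and ``$D$ an algebra'' I would argue in the same way. A routine rewriting of the extra axioms against the dual basis shows that $\psi\mapsto\psi^{*_C}$ carries a coalgebra factorization for $C$ (with $D$ a coalgebra) to a (left-left) entwining map for $C^*$, and a (right-right) entwining map for $C$ (with $D$ an algebra) to an algebra factorization for $C^*$; the corresponding clauses of Theorem~\ref{Tamb} then produce bijections with right $H(C^*)$-module coalgebra, resp.\ module algebra, structures on $D$. The delicate point, and the step I expect to be the main obstacle, is to verify that these are exactly the right $H(C^*)^{\text{cop}}$-module coalgebra, resp.\ module algebra, structures: here the co-opposite is \emph{not} a mere relabelling, since the diagonal $H$-action on $D\otimes D$ uses $\Delta_H$ whereas $\Delta_{H(C^*)^{\text{cop}}}=\tau\circ\Delta_{H(C^*)}$, so one must check that the transposition introduced by $(\ )^{*_C}$ when manipulating the dual basis compensates this $\tau$ precisely, so that the module-(co)algebra compatibility conditions agree. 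Once this is settled, well-definedness modulo the relations $I'$ and the mutual inverseness of the two assignments are formal; alternatively, one can bypass Theorem~\ref{Tamb} and establish all four correspondences directly, by the evident dualisation of the computations underlying Theorem~\ref{Tamb}.\qed
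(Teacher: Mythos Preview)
The paper does not supply a proof of this theorem at all: it is stated with the attribution ``(\cite{Tambara})'' and followed immediately by a \verb|\bigskip|, so there is no argument in the text to compare yours against. The same is true of Theorem~\ref{Tamb}, on which you rely.

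Your dualisation strategy is a natural way to reduce this statement to Theorem~\ref{Tamb}, and the basic correspondence (cosemi--entwining maps $\leftrightarrow$ right $H(C^*)$-module structures $=$ right $H(C^*)^{\text{cop}}$-module structures) goes through as you describe, once one checks that $\psi\mapsto\psi^{*_C}$ really is a bijection; Lemma~2.11 in the paper gives only the forward direction, so you do need to spell out the inverse via $C\cong C^{**}$ and verify that the semi--entwining axioms for $\psi^{*_C}$ on $C^*$ translate back to the cosemi--entwining axioms for $\psi$ on $C$. You have correctly isolated the genuinely delicate point in the refined clauses: the compatibility condition for a right $H$-module (co)algebra uses $\Delta_H$, and $\Delta_{H(C^*)^{\text{cop}}}=\tau\circ\Delta_{H(C^*)}$, so an $H(C^*)$-module coalgebra is \emph{not} literally the same thing as an $H(C^*)^{\text{cop}}$-module coalgebra; the claim that the transposition introduced by dualising against the dual basis exactly cancels this $\tau$ is the crux and must be checked explicitly rather than asserted. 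Note also that the displayed formula $d[c^*\otimes c]=c^*(d_\alpha)c^\alpha$ in the paper appears to contain a typo (the result should lie in $D$, not in $C$; presumably $c^*(c^\alpha)d_\alpha$ is intended), which you should bear in mind when chasing the explicit formulas.
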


\bigskip

\bigskip

\subsection{Yang--Baxter Systems}

From now on we work over a field $\mathbb{K}$. It is convenient to introduce the {\em constant Yang--Baxter commutator} of the linear maps

$R:V\otimes V'\rightarrow V\otimes V',\ S:V\otimes V''\rightarrow
V\otimes V'',\
T:V'\otimes V''\rightarrow V'\otimes V''$ by
$$ [R,S,T]:= R_{12} S_{13} T_{23} - T_{23} S_{13} R_{12}. $$

In this notation, the quantum Yang--Baxter equation reads
$  [R,R,R]=0.$

\bigskip

\begin{dfn}{\bf [Yang--Baxter systems of type I]}
A system of linear maps of vector spaces
$
W:V\otimes V\rightarrow V\otimes V,\ Z:V'\otimes V'\rightarrow V'\otimes V',\
X:V\otimes V'\rightarrow V\otimes V' \
$
is called a WXZ system (or a Yang--Baxter system of type I) if
\begin{subequations}
\label{WXZ}
\begin{gather}
[W,W,W]=0,\ [W,X,X]=0,\label{WXZa}\\
[Z,Z,Z]=0,\ [X,X,Z]=0.\label{WXZb}
\end{gather}
\end{subequations}
A system of linear maps $W,X$ satisfying equations (\ref{WXZa})
is called a semi
Yang--Baxter system. One can associate a  WXZ system to a semi
Yang--Baxter system by setting $ Z = I \otimes I$.
\end{dfn}

\begin{rem}\label{remyb1}
From a Yang--Baxter system of type I, one can construct
a Yang--Baxter operator on $(V \oplus V)\otimes (V\oplus V)$, provided that the map $ X $ is invertible
(see \cite{fbrz}).
\end{rem}

\bigskip

Let $A$ be an algebra, and the map
\begin{equation}\label{FlR}
W=R^A_{r,s}:A\otimes A\rightarrow A\otimes A,\
a\otimes b\mapsto sba\otimes 1+ r1\otimes ba -sb\otimes a,
\end{equation}
for some arbitrary $s,r\in\mathbb{K}$ (see \cite{florin}).
Then, $[W,W,W]=0$.

The following is an enhanced version of Theorem 2.3 of \cite{fbrz}.

\begin{thm}(see \cite{fbrz})\label{FlorinBrze}
Let $A$ be an algebra, let $B$ be a vector space, and
$p,q,s,r\in\mathbb{K}$.

Let $W=R^A_{r,s}$, and let $X:A\otimes B\rightarrow A\otimes B$ be a linear map
such that $X(1_A\otimes b)=1_A\otimes b$, for all $b\in B$.

i) Then
$W,X$ is a semi Yang--Baxter system if and only if $\psi=X\circ \tau_{B,A}$ is a
semi--entwining map.

ii) Similarly, if $B$ is an algebra,
$Z=R^B_{p,q}$, and $X(a\otimes 1_B)=a\otimes 1_B$ for all $a\in A,$
then $W,X,Z$ is a Yang--Baxter system of type I if and only if $\psi$ is an algebra
factorization.
\end{thm}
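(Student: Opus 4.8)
The plan is to unwind both sides of each Yang--Baxter commutator condition in terms of the algebra multiplication and the would-be semi--entwining data, and to recognize the resulting identities as precisely the semi--entwining (resp.\ algebra factorization) axioms. Since $\psi=X\circ\tau_{B,A}$ and $\tau$ is invertible, the correspondence $X\leftrightarrow\psi$ is a bijection, so it suffices to prove the two-way implications in each part. Throughout I write $\psi(b\otimes a)=a_\alpha\otimes b^\alpha$ as in Definition~\ref{semientdef}, so that $X(a\otimes b)=a_\alpha\otimes b^\alpha$ as well (after the twist), and I use that $X(1_A\otimes b)=1_A\otimes b$ is already assumed, which matches the unit axiom $\psi(b\otimes 1_A)=1_A\otimes b$ only after being paired with the $W$-side computation.

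For part (i): the hypothesis $[W,W,W]=0$ holds automatically by the displayed fact about $R^A_{r,s}$, so the content of ``$W,X$ is a semi Yang--Baxter system'' is exactly the single equation $[W,X,X]=0$ on $A\otimes A\otimes B$. First I would expand $W_{12}$, $X_{13}$, $X_{23}$ using the explicit formula \eqref{FlR} for $W=R^A_{r,s}$ and the Sweedler-like notation for $X$; the term $W_{12}$ contributes the three summands $sba\otimes 1\otimes(-)$, $r\,1\otimes ba\otimes(-)$, $-sb\otimes a\otimes(-)$ acting in the first two tensor legs, while the $X$'s move the $B$-component (third leg) past the $A$-components. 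Collecting the coefficients of the independent parameters $r$ and $s$ separately, the $r$-part will reduce to the multiplicativity axiom $\psi(b\otimes aa')=a_\alpha a'_\beta\otimes b^{\alpha\beta}$ in \eqref{semi} (this is the ``$1\otimes ba$'' contribution, which forces $X$ applied to a product to factor through iterated applications of $X$), and the terms proportional to $s$ and the parameter-free terms will either cancel using the unit normalization $X(1_A\otimes b)=1_A\otimes b$ or again collapse to the same multiplicativity identity. Running the computation in reverse — assuming $\psi$ is semi--entwining and verifying $[W,X,X]=0$ summand by summand — gives the converse. This parameter-separation bookkeeping is the main obstacle: one must be careful that the identity is genuinely equivalent to \eqref{semi} and not merely implied by it, which is why the normalization hypothesis $X(1_A\otimes b)=1_A\otimes b$ is essential (it supplies the unit axiom for free and makes the $s$-dependent terms vanish).

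For part (ii), one adds $Z=R^B_{p,q}$ and the extra normalization $X(a\otimes 1_B)=a\otimes 1_B$, which in $\psi$-language is $\psi(1_B\otimes a)=a\otimes 1_B$ — one of the two extra algebra-factorization axioms. Now $[Z,Z,Z]=0$ is again automatic by the same displayed fact (applied to $B$ in place of $A$), so the new content is $[X,X,Z]=0$ on $A\otimes B\otimes B$. I would expand exactly as before: $Z_{23}$ contributes the three $R^B_{p,q}$-summands in the last two legs, and $X_{12},X_{13}$ carry the $A$-component (first leg) past the two $B$-components. Separating the $p$- and $q$-coefficients, the term $1\otimes b'b$ (the ``$q$-part'') forces the cofactor relation $\psi(bb'\otimes a)=a_{\alpha\beta}\otimes b^\beta b'^\alpha$ from the definition of algebra factorization, while the remaining terms are handled by the two unit normalizations and by the part-(i) axioms already in force. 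The converse is the same computation read backwards. Combining: $W,X,Z$ is a Yang--Baxter system of type I iff $[W,X,X]=0$ and $[X,X,Z]=0$ iff ($\psi$ is semi--entwining) and ($\psi$ additionally satisfies the two factorization axioms) iff $\psi$ is an algebra factorization. The only genuine subtlety, again, is the parameter-coefficient matching; no conceptual difficulty arises beyond careful tensor-leg bookkeeping, so I would relegate the explicit expansions to the reader after exhibiting the key $r$- (resp.\ $q$-) coefficient identity in each part. \qed
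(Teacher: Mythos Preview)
The paper does not supply its own proof of this theorem: it is stated with the attribution ``(see \cite{fbrz})'' and no argument follows, so there is nothing in the present paper to compare your proposal against. Your sketch is the natural direct-computation route one would expect from the cited source, and the overall shape (expand $W_{12}X_{13}X_{23}-X_{23}X_{13}W_{12}$, use the unit normalisation to kill the extraneous terms, and read off the multiplicativity axiom of Definition~\ref{semientdef}; then argue symmetrically for $[X,X,Z]$) is correct.

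One genuine quibble: you repeatedly speak of ``collecting the coefficients of the independent parameters $r$ and $s$ separately'', but in the hypothesis $r,s\in\mathbb{K}$ are \emph{fixed} scalars, not formal indeterminates, so you are not entitled to equate coefficients. What actually happens in the computation is that, after the $-s\,a'_\alpha\otimes a_\beta\otimes b^{\alpha\beta}$ terms cancel on the two sides, the remaining $r$-weighted piece and the remaining $s$-weighted piece each amount to the \emph{same} identity $\psi(b\otimes a'a)=a'_\alpha a_\beta\otimes b^{\alpha\beta}$, merely tensored with $1$ in different slots. Hence for any $(r,s)\neq(0,0)$ the equation $[W,X,X]=0$ is equivalent to the multiplicativity axiom, without any coefficient-separation argument. (For $r=s=0$ one has $W=0$ and the statement is vacuous in one direction; the theorem implicitly excludes this degenerate case.) You should rewrite that step accordingly. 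The analogous remark applies to your treatment of $p,q$ in part~(ii).
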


\bigskip

\begin{dfn}{\bf [Yang--Baxter systems of type II]}
A system of linear maps of vector spaces
$ \ \mathbb{A}, \ \mathbb{B}, \ \mathbb{C}, \ \mathbb{D} : V \otimes V
\rightarrow V \otimes V$
is called  a Yang--Baxter system of type II if
$$ [\mathbb{A},\mathbb{A},\mathbb{A}]=0,
\ \ \ \ \ \ \ [\mathbb{D},\mathbb{D},\mathbb{D}]=0,$$
$$ [\mathbb{A},\mathbb{C},\mathbb{C}]=0, \ \ \ \ \ \ \
[\mathbb{D},\mathbb{B},\mathbb{B}]=0,$$
$$ [\mathbb{A},\mathbb{B^+},\mathbb{B^+}]=0, \ \ \ \ \ \ \
[\mathbb{D},\mathbb{C^+},\mathbb{C^+}]=0,$$
$$ [\mathbb{A},\mathbb{C},\mathbb{B^+}]=0, \ \ \ \ \ \ \
[\mathbb{D},\mathbb{B},\mathbb{C^+}]=0,$$

where $X^+ = \tau X \tau$ (and $\tau$ is the twist map).\\

\end{dfn}

\begin{rem}

Yang--Baxter systems of type II
are related to the
algebras considered in \cite{hlav}, which
include (algebras of functions on) quantum groups, quantum super-groups,
braided groups, quantized braided groups, reflection algebras and others.

\end{rem}

\bigskip

The following theorems present solutions for the Yang--Baxter systems.

\begin{thm} { \bf (see \cite{np2})}
Let $A$ be a commutative algebra and $ \lambda, \lambda' \in \mathbb{K}$.
Then,
$ \ \mathbb{A}, \ \mathbb{B}, \ \mathbb{C}, \ \mathbb{D} : A \otimes A
\rightarrow A \otimes A, $
$ \ \mathbb{A}( a \otimes b)
= \lambda 1 \otimes ab + ab \otimes 1 - b \otimes a $,
$ \ \mathbb{B}( a \otimes b)= \mathbb{C}(a\otimes b)=  1 \otimes ab + ab \otimes 1 - b \otimes a $ and
$ \ \mathbb{D}( a \otimes b)= \lambda' 1 \otimes ab + ab \otimes 1 - b \otimes a $
is a Yang--Baxter system of type II.
\end{thm}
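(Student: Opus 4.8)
The plan is to verify directly that the eight constant Yang--Baxter commutators vanish, exploiting the fact that all four maps $\mathbb{A},\mathbb{B},\mathbb{C},\mathbb{D}$ belong to the one-parameter family $R^A_{r,s}$ of equation~(\ref{FlR}) (up to the twist), specialised with $r=1$ and $s=1$ except that $\mathbb{A}$ and $\mathbb{D}$ carry an extra scalar $\lambda$, $\lambda'$ on the first summand. Concretely, writing $\mu(x\otimes y)=xy$ for the (commutative) multiplication and $u$ for the unit, one checks that each of these maps factors through $\mu$ in the sense that $\mathbb{A}=(u\otimes\id+\id\otimes u-\id)\circ(\text{something involving }\mu\text{ and }\tau)$; more usefully, since $A$ is commutative, each map sends $a\otimes b$ to a symmetric-in-the-product expression, and this is exactly the source of the braid-type identities. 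First I would record, from the text preceding Theorem~\ref{FlorinBrze}, that $[W,W,W]=0$ for $W=R^A_{r,s}$; applying this with the appropriate scalars gives $[\mathbb{A},\mathbb{A},\mathbb{A}]=0$ and $[\mathbb{D},\mathbb{D},\mathbb{D}]=0$ immediately, and also $[\mathbb{B},\mathbb{B},\mathbb{B}]=[\mathbb{C},\mathbb{C},\mathbb{C}]=0$ (the case $\lambda=1$), which will be reused below.

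Next I would treat the mixed relations. For $[\mathbb{A},\mathbb{C},\mathbb{C}]=0$ and $[\mathbb{D},\mathbb{B},\mathbb{B}]=0$: these have the same shape as the semi Yang--Baxter condition $[W,X,X]=0$ of Definition~\ref{tambdef}'s neighbour (the WXZ system), so I would invoke Theorem~\ref{FlorinBrze}(i), or rather its underlying computation, with $W=\mathbb{A}$ (which is $R^A_{1,1}$ twisted, modulo the $\lambda$) and $X$ built from $\mathbb{C}$. The key point is that $\mathbb{C}\circ\tau$ is (a scalar-one instance of) the semi--entwining map $\gamma_1$ from Remark~2.2(i), i.e.\ the canonical braided-algebra structure $\psi^A$ of Theorem~3.4; since $\psi^A$ is a semi--entwining map for \emph{any} algebra, and $\mathbb{A}\circ\tau$ is the corresponding $W$, the semi Yang--Baxter identity $[W,X,X]=0$ holds by that theorem. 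The same argument with $\mathbb{D}$ in place of $\mathbb{A}$ and $\mathbb{B}$ in place of $\mathbb{C}$ gives $[\mathbb{D},\mathbb{B},\mathbb{B}]=0$. The relations $[\mathbb{A},\mathbb{B}^+,\mathbb{B}^+]=0$ and $[\mathbb{D},\mathbb{C}^+,\mathbb{C}^+]=0$ are then obtained by conjugating the previous two identities by $\tau$ on the relevant legs, using commutativity of $A$ to see that $\mathbb{B}^+$ and $\mathbb{C}^+$ are again maps of the same family (indeed, for the self-inverse operator $\psi^A$ one has strong symmetry, cf.\ Remark 3.5, so $\tau$-conjugation stays inside the family).

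Finally, the two genuinely ``cross'' relations $[\mathbb{A},\mathbb{C},\mathbb{B}^+]=0$ and $[\mathbb{D},\mathbb{B},\mathbb{C}^+]=0$ do not reduce to a single-operator or semi Yang--Baxter identity and must be expanded by hand. I expect this to be the main obstacle. The strategy is to write out $[\mathbb{A},\mathbb{C},\mathbb{B}^+]=\mathbb{A}_{12}\,\mathbb{C}_{13}\,\mathbb{B}^+_{23}-\mathbb{B}^+_{23}\,\mathbb{C}_{13}\,\mathbb{A}_{12}$ on a generator $a\otimes b\otimes c\in A^{\otimes 3}$, use the explicit formulas together with $1_A$-normalisations ($\mathbb{A}(a\otimes 1)=1\otimes a$, $\mathbb{A}(1\otimes a)=a\otimes 1$, similarly for $\mathbb{B},\mathbb{C}$, and the twisted analogues for $\mathbb{B}^+$), and repeatedly collapse products via commutativity of $\mu$; since $\mathbb{B}=\mathbb{C}$ and they are the ``$\lambda=1$'' specialisation while $\mathbb{A}$ carries $\lambda$, every monomial on the left-hand side will be matched, term for term, by one on the right-hand side after reindexing, with the $\lambda$-dependence cancelling because it only multiplies a factor of the form $1\otimes(\text{product})$ that appears symmetrically. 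The computation for $[\mathbb{D},\mathbb{B},\mathbb{C}^+]=0$ is identical with $\lambda\leftrightarrow\lambda'$ and the roles of $\mathbb{B},\mathbb{C}$ exchanged, so it suffices to do one of them; I would present the expansion of the $\mathbb{A}\mathbb{C}\mathbb{B}^+$ case in full and leave its mirror image to the reader. This last verification is routine but lengthy, which is why a clean bookkeeping of the $1\otimes(\cdot)$, $(\cdot)\otimes 1$ and ``diagonal'' terms — exactly as in the proof of Theorem~3.4(i) above — is the part to get right.
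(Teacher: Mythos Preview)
The paper does not give its own proof of this statement; it is quoted as a result from \cite{np2}. Your overall strategy---identifying $\mathbb{A},\mathbb{B},\mathbb{C},\mathbb{D}$ with members of the family $R^A_{r,s}$ (using commutativity of $A$ to match $ab$ with $ba$) and then invoking Theorem~\ref{FlorinBrze}(i) for the mixed commutators---is sound and would go through.

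You have, however, overlooked a simplification that dissolves what you call the ``main obstacle.'' Since $A$ is commutative, a one-line computation gives
\[
\mathbb{B}^+(a\otimes b)=\tau\,\mathbb{B}\,\tau(a\otimes b)
=\tau\bigl(1\otimes ba+ba\otimes 1-a\otimes b\bigr)
=ab\otimes 1+1\otimes ab-b\otimes a
=\mathbb{B}(a\otimes b),
\]
so in fact $\mathbb{B}^+=\mathbb{B}=\mathbb{C}=\mathbb{C}^+$. All six mixed relations therefore collapse to just two, namely $[\mathbb{A},\mathbb{B},\mathbb{B}]=0$ and $[\mathbb{D},\mathbb{B},\mathbb{B}]=0$, and these you already obtain from Theorem~\ref{FlorinBrze}(i) together with the fact that $\mathbb{B}\circ\tau=\gamma_1$ is the semi--entwining map of Remark~2.2(i). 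The lengthy hand expansion of $[\mathbb{A},\mathbb{C},\mathbb{B}^+]$ that you anticipate is unnecessary. This also clarifies your step~3: it is not merely that $\tau$-conjugation ``stays inside the family,'' but that $\mathbb{B}^+$ literally equals $\mathbb{B}$, so there is nothing to conjugate.
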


\begin{thm}\label{teoyb1}
Let $ W =  \mathbb{A}, \ X = \mathbb{B} = \mathbb{C}, \
Z = \mathbb{D}$ in the above theorem. It turns out that
$ W, X, Z $ is also a Yang--Baxter system of type I.
\end{thm}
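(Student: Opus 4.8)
The plan is to verify directly that the four maps $W=\mathbb{A}$, $X=\mathbb{B}=\mathbb{C}$, $Z=\mathbb{D}$ satisfy the defining equations \eqref{WXZa}--\eqref{WXZb} of a WXZ system, namely $[W,W,W]=0$, $[W,X,X]=0$, $[Z,Z,Z]=0$, $[X,X,Z]=0$. Two of these, $[W,W,W]=0$ and $[Z,Z,Z]=0$, are already contained in the previous theorem: $W$ and $Z$ are among the maps $\mathbb{A}$ and $\mathbb{D}$ which satisfy the type-II Yang--Baxter relations, and $[\mathbb{A},\mathbb{A},\mathbb{A}]=0$, $[\mathbb{D},\mathbb{D},\mathbb{D}]=0$ are explicitly listed there. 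So it remains to extract $[W,X,X]=0$ and $[X,X,Z]=0$ from the type-II relations.

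The key observation is that in this special case all four maps act on the \emph{same} space $A\otimes A$, and moreover $\mathbb{B}=\mathbb{C}=X$ is the symmetric operator $\psi^A(a\otimes b)=1\otimes ab+ab\otimes 1-b\otimes a$ (up to the sign convention; one should check against Remark 2.2 / Definition 3.2 that $X=\psi^A\circ\tau$ or the relevant normalization), which satisfies $X^+=\tau X\tau = X$ since $X$ is built from the commutative multiplication and is invariant under conjugation by the twist. Hence in the type-II list the relations $[\mathbb{A},\mathbb{C},\mathbb{C}]=0$ and $[\mathbb{A},\mathbb{B}^+,\mathbb{B}^+]=0$ both collapse to $[W,X,X]=0$, and similarly $[\mathbb{D},\mathbb{B},\mathbb{B}]=0$ and $[\mathbb{D},\mathbb{C}^+,\mathbb{C}^+]=0$ both collapse to $[Z,X,X]=0 = [X,X,Z]$ after noting that the Yang--Baxter commutator $[R,S,T]$ with the appropriate placement of legs gives the required $[X,X,Z]$ form (here one must be careful that the type-II bracket $[\mathbb{D},\mathbb{B},\mathbb{B}]$ has $\mathbb{D}$ in the $12$-slot while the type-I equation $[X,X,Z]=0$ has $Z$ in the $12$-slot; since all three maps live on $A\otimes A$ one transfers between the two forms by conjugating with $\tau_{13}$ and using $X^+=X$). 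Thus every type-I equation is a consequence of a type-II equation once $X^+=X$ is established.

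I would therefore organize the proof in three short steps: (i) recall from the previous theorem that $[\mathbb{A},\mathbb{A},\mathbb{A}]=0$ and $[\mathbb{D},\mathbb{D},\mathbb{D}]=0$, giving $[W,W,W]=0$ and $[Z,Z,Z]=0$; (ii) verify that $X=\mathbb{B}=\mathbb{C}$ satisfies $X^+=\tau X\tau=X$, which follows because $X$ is a polynomial expression in $I$ and the (commutative) multiplication $M$ and unit $u$, all of which are $\tau$-symmetric; (iii) deduce $[W,X,X]=0$ from $[\mathbb{A},\mathbb{C},\mathbb{C}]=0$ and $[X,X,Z]=0$ from $[\mathbb{D},\mathbb{B},\mathbb{B}]=0$, using $X^+=X$ to match the leg placements. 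The main obstacle I anticipate is purely bookkeeping: matching the index conventions so that the type-II commutators with maps in the $12$-$13$-$23$ positions really do reproduce the type-I commutators in the precise form of \eqref{WXZa}--\eqref{WXZb}. Since the two theorems were presumably written with consistent conventions, this should amount to a line or two checking that the substitutions $W=\mathbb{A}$, $X=\mathbb{B}=\mathbb{C}$, $Z=\mathbb{D}$ carry the type-II list onto the type-I list; no genuinely new computation is required, and in fact the whole statement can be read off once one notices the $X^+=X$ symmetry.
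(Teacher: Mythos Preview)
Your reduction of $[W,W,W]=0$, $[Z,Z,Z]=0$, and $[W,X,X]=0$ to the type-II list is fine: the first two are literally there, and since $\mathbb{B}=\mathbb{C}=X$, the relation $[\mathbb{A},\mathbb{C},\mathbb{C}]=0$ is exactly $[W,X,X]=0$. The problem is the fourth equation. Conjugating $[\mathbb{D},\mathbb{B},\mathbb{B}]=0$ by $\tau_{13}$ does not give $[\mathbb{B},\mathbb{B},\mathbb{D}]=0$; it gives $[\mathbb{B}^+,\mathbb{B}^+,\mathbb{D}^+]=0$. Granting your observation that $X^+=X$ in the commutative case, you obtain $[X,X,\mathbb{D}^+]=0$, not $[X,X,\mathbb{D}]=0$. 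And $\mathbb{D}^+\neq\mathbb{D}$: a one-line computation shows $\tau\mathbb{D}\tau(a\otimes b)=\lambda'ab\otimes 1+1\otimes ab-b\otimes a$, which agrees with $\mathbb{D}(a\otimes b)=\lambda'1\otimes ab+ab\otimes 1-b\otimes a$ only when $\lambda'=1$. So the type-II relations alone, together with $X^+=X$, do not yield $[X,X,Z]=0$; some genuinely new input is needed for that equation.

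The paper does not try to squeeze type I out of type II. Instead it observes that $\psi(a\otimes b)=1\otimes ab+ab\otimes 1-a\otimes b$ (so that $X=\psi\circ\tau$) is an \emph{algebra factorization} $A\otimes A\to A\otimes A$, and then invokes Theorem~\ref{FlorinBrze}(ii): for $W=R^A_{r,s}$, $Z=R^A_{p,q}$ and $X=\psi\circ\tau$ with $\psi$ an algebra factorization, $(W,X,Z)$ is automatically a type-I system. That route handles $[X,X,Z]=0$ directly, and, as the paper notes, it even goes through when $A$ is non-commutative --- a case where your symmetry $X^+=X$ fails outright. If you want to repair your argument, the cleanest fix is to replace step (iii) for $[X,X,Z]=0$ by this appeal to Theorem~\ref{FlorinBrze}; alternatively you can verify $[X,X,Z]=0$ by the straightforward expansion the paper also mentions.
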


{\bf Proof}

First, let us observe that the result holds even for
$A$ a non-commutative algebra.

One way to prove the theorem is by direct computations.

Alternatively, one can observe that
\begin{equation}   \label{map}
 \psi( a \otimes b)= 1 \otimes ab + ab \otimes 1 - a \otimes b
\end{equation}
is an algebra factorization, and apply Remark 2.4 of \cite{fbrz}.

Also, refer to Theorem 5.2 of \cite{np1}.
\qed

\begin{rem}
One can combine the proof of the Theorem \ref{teoyb1} with Remark 2.4 and
Proposition 2.9 of \cite{fbrz}, to obtain a large class of Yang--Baxter
operators defined on $ V \otimes V$, where $ V= A \oplus A$. See also Remark \ref{remyb1}.
\end{rem}

\begin{thm}\label{thmYBalg}

Let $A$ be an algebra;
$p,q,s,r\in\mathbb{K}$;
$\psi, \psi':A\otimes A\rightarrow A\otimes A$ semi--entwining maps;
$ \ \mathbb{A}, \ \mathbb{B}, \ \mathbb{C}, \ \mathbb{D} : A \otimes A
\rightarrow A \otimes A, $
$ \ \mathbb{A} =R^A_{r,s}$,
$ \ \mathbb{B}= \psi \circ \tau $,
$\mathbb{C} = \psi' \circ \tau $,
$ \ \mathbb{D}=R^A_{p,q}$.
If $\psi'= \tau \circ \psi \circ \tau $, then
$ \ \mathbb{A}, \ \mathbb{B}, \ \mathbb{C}, \ \mathbb{D} $
is a Yang--Baxter system of type II.
\end{thm}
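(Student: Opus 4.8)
The plan is to reduce the eight Yang--Baxter-type identities defining a Yang--Baxter system of type II to facts that are already available in the paper. First I would record the six maps that actually enter the identities: $\mathbb{A}=R^A_{r,s}$, $\mathbb{D}=R^A_{p,q}$, $\mathbb{B}=\psi\circ\tau$, $\mathbb{C}=\psi'\circ\tau$, together with their twisted companions $\mathbb{B}^+=\tau\mathbb{B}\tau=\tau\circ\psi$ and $\mathbb{C}^+=\tau\mathbb{C}\tau=\tau\circ\psi'$. Using the hypothesis $\psi'=\tau\circ\psi\circ\tau$, one gets $\mathbb{C}=\psi'\circ\tau=\tau\circ\psi$ and hence $\mathbb{C}=\mathbb{B}^+$ and $\mathbb{C}^+=\tau\circ\psi'=\psi\circ\tau=\mathbb{B}$. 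So really there are only four distinct operators in play, $\mathbb{A},\mathbb{D},\mathbb{B},\mathbb{B}^+$, and the eight identities collapse to far fewer genuinely different statements.

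Next I would dispatch the identities one family at a time. The equations $[\mathbb{A},\mathbb{A},\mathbb{A}]=0$ and $[\mathbb{D},\mathbb{D},\mathbb{D}]=0$ are exactly the statement, quoted in the excerpt just before Theorem~\ref{FlorinBrze}, that $R^A_{r,s}$ (resp. $R^A_{p,q}$) satisfies the quantum Yang--Baxter equation, i.e. $[W,W,W]=0$ for $W=R^A_{r,s}$. For the mixed identities involving $\mathbb{A}$ and $\mathbb{B}$ or $\mathbb{B}^+$, the point is that $\psi$ is a semi--entwining map, so by Theorem~\ref{FlorinBrze}(i) the pair $W=R^A_{r,s}$, $X=\psi\circ\tau_{B,A}$ is a semi Yang--Baxter system, which by Definition~\ref{WXZ} means precisely $[W,W,W]=0$ and $[W,X,X]=0$; since $\mathbb{B}=\psi\circ\tau=X$, this gives $[\mathbb{A},\mathbb{B},\mathbb{B}]=0$. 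To reach $[\mathbb{A},\mathbb{C},\mathbb{C}]=0$, $[\mathbb{A},\mathbb{B}^+,\mathbb{B}^+]=0$ and the cross term $[\mathbb{A},\mathbb{C},\mathbb{B}^+]=0$, I would use the substitutions from the first paragraph: $\mathbb{C}=\mathbb{B}^+$, so $[\mathbb{A},\mathbb{C},\mathbb{C}]=[\mathbb{A},\mathbb{B}^+,\mathbb{B}^+]$ and $[\mathbb{A},\mathbb{C},\mathbb{B}^+]=[\mathbb{A},\mathbb{B}^+,\mathbb{B}^+]$, so all three are the single identity $[\mathbb{A},\mathbb{B}^+,\mathbb{B}^+]=0$. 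This last one is the ``$+$'' version of $[\mathbb{A},\mathbb{B},\mathbb{B}]=0$; it follows because $X$ satisfies a braid-type relation with $W$ and conjugating all three positions by $\tau$ (equivalently, passing from the braid form to the QYBE form as in the Remark after Definition~2.1) turns $[W,X,X]=0$ into $[W,X^+,X^+]=0$. The identities with $\mathbb{D}$ in place of $\mathbb{A}$ are handled identically, now using that $\psi'=\tau\circ\psi\circ\tau$ is again a semi--entwining map, so $Z=R^B_{p,q}=\mathbb{D}$ (here $B=A$) together with $X'=\psi'\circ\tau=\mathbb{C}$ forms a semi Yang--Baxter system, yielding $[\mathbb{D},\mathbb{C},\mathbb{C}]=0$ and, after the $\tau$-conjugation and the identifications $\mathbb{C}^+=\mathbb{B}$, the remaining three: $[\mathbb{D},\mathbb{B},\mathbb{B}]=0$, $[\mathbb{D},\mathbb{C}^+,\mathbb{C}^+]=0$, $[\mathbb{D},\mathbb{B},\mathbb{C}^+]=0$.

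I would close by checking the one hypothesis that Theorem~\ref{FlorinBrze} needs beyond ``semi--entwining'': the normalisation $X(1_A\otimes b)=1_A\otimes b$, i.e. $\psi\circ\tau(1_A\otimes b)=1_A\otimes b$, which is just $\psi(b\otimes 1_A)=1_A\otimes b$, the first axiom of a semi--entwining map; and likewise for $\psi'$, which is semi--entwining because it is $\tau\circ\psi\circ\tau$ (a direct check, or appeal to the symmetry of Definition~\ref{semientdef} under the twist). The main obstacle I anticipate is bookkeeping rather than mathematics: making sure the $\tau$-conjugations are applied in the right tensor slots so that ``$[W,X,X]=0 \Rightarrow [W,X^+,X^+]=0$'' is actually correct, and confirming the identifications $\mathbb{C}=\mathbb{B}^+$, $\mathbb{C}^+=\mathbb{B}$ with the convention $X^+=\tau X\tau$ used in the definition. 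Once those are pinned down, every one of the eight equations is either a quoted fact ($[R^A,R^A,R^A]=0$) or an instance of Theorem~\ref{FlorinBrze}(i) possibly after conjugating by $\tau$.
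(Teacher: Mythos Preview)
Your overall architecture is right—use Theorem~\ref{FlorinBrze}(i) twice and then exploit the identifications $\mathbb{C}=\mathbb{B}^+$, $\mathbb{C}^+=\mathbb{B}$, which you derive correctly. But you mis-pair the semi--entwining maps with the $R$-matrices, and this forces you into a step that does not work.

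Concretely: from $\psi$ semi--entwining and Theorem~\ref{FlorinBrze}(i) you obtain $[\mathbb{A},\mathbb{B},\mathbb{B}]=0$, which is \emph{not} one of the eight required identities. You then assert that ``conjugating all three positions by $\tau$'' turns $[W,X,X]=0$ into $[W,X^+,X^+]=0$. This is the gap. Conjugating $W_{12}X_{13}X_{23}=X_{23}X_{13}W_{12}$ by the full reversal $\tau_{13}$ yields
\[
(W^+)_{23}(X^+)_{13}(X^+)_{12}=(X^+)_{12}(X^+)_{13}(W^+)_{23},
\]
i.e.\ $[X^+,X^+,W^+]=0$, not $[W,X^+,X^+]=0$; and $W=R^A_{r,s}$ is \emph{not} invariant under $(\cdot)^+$ for non-commutative $A$ (one computes $(R^A_{r,s})^+(a\otimes b)=r\,ab\otimes 1+s\,1\otimes ab - s\,b\otimes a\neq R^A_{r,s}(a\otimes b)$). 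The Remark after Definition~1.1 concerns a single operator, not a mixed commutator with $W$ held fixed, so it does not rescue the step. (You also claim in the last paragraph that $\psi'=\tau\psi\tau$ is automatically semi--entwining ``by symmetry of the definition''; it is not—cf.\ Theorem~\ref{algfactsemithm}. In the present theorem $\psi'$ being semi--entwining is an explicit \emph{hypothesis}.)

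The repair is immediate and is exactly what the paper does: apply Theorem~\ref{FlorinBrze}(i) with $(W,X)=(\mathbb{A},\mathbb{C})=(R^A_{r,s},\,\psi'\circ\tau)$, using that $\psi'$ is semi--entwining, to get $[\mathbb{A},\mathbb{A},\mathbb{A}]=0$ and $[\mathbb{A},\mathbb{C},\mathbb{C}]=0$ directly; and apply it with $(W,X)=(\mathbb{D},\mathbb{B})=(R^A_{p,q},\,\psi\circ\tau)$ to get $[\mathbb{D},\mathbb{D},\mathbb{D}]=0$ and $[\mathbb{D},\mathbb{B},\mathbb{B}]=0$. These are the first four identities in Definition~3.17. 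Your identifications $\mathbb{B}^+=\mathbb{C}$ and $\mathbb{C}^+=\mathbb{B}$ then give the remaining four for free, with no conjugation argument needed.
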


{\bf Proof.} Use Theorem \ref{FlorinBrze}i) to check the first four equations. Then, observe that
$ \mathbb{B}=\mathbb{C^+} \iff \psi'= \tau \circ \psi \circ \tau$. The last four equations then follow.
\qed

\begin{thm}\label{algfactsemithm}

Let $A$ be an algebra and
$\psi:A\otimes A\rightarrow A\otimes A$ a semi--entwining map.

Then, there exists a semi--entwining map
$\psi':A\otimes A\rightarrow A\otimes A$ such that
$\psi'= \tau \circ \psi \circ \tau $ if and only if
$ \psi $, viewed as
$\psi:A^{op}\otimes A\rightarrow A\otimes A^{op}$, is an algebra
factorization.

\end{thm}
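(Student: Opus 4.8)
The plan is to unwind both conditions into explicit identities on $\psi$ and match them term by term. Write $\psi(b\otimes a)=a_\alpha\otimes b^\alpha$. Since $\psi$ is already assumed to be a semi--entwining map for the algebra $A$, it satisfies $\psi(b\otimes 1_A)=1_A\otimes b$ and $\psi(b\otimes aa')=a_\alpha a'_\beta\otimes b^{\alpha\beta}$; these will carry over verbatim because the underlying multiplication on the left tensor factor is unchanged. What the algebra factorization condition in Definition \ref{semientdef} additionally demands, when $B=A^{\op}$, is the pair of axioms $\psi(1_{A^{\op}}\otimes a)=a\otimes 1_{A^{\op}}$ and $\psi(bb'\otimes a)=a_{\alpha\beta}\otimes b^\beta\opm b'^\alpha$, where $b\,b'$ in the source is multiplication in $A^{\op}$, i.e. $b'b$ in $A$, and $b^\beta\opm b'^\alpha=b'^\alpha b^\beta$ in $A$. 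So the extra content is exactly: (a) $\psi(1_A\otimes a)=a\otimes 1_A$, and (b) $\psi(b'b\otimes a)=a_{\alpha\beta}\otimes b'^\alpha b^\beta$, for all $a,b,b'\in A$; relabelling, (b) reads $\psi(bb'\otimes a)=a_{\alpha\beta}\otimes b^\alpha b'^\beta$ (multiplication in $A$ throughout).

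Next I would do the same for the right-hand side. Saying that there exists a semi--entwining $\psi':A\otimes A\rightarrow A\otimes A$ with $\psi'=\tau\circ\psi\circ\tau$ is, since $\tau$ is an isomorphism, simply the assertion that the single map $\tau\circ\psi\circ\tau$ happens to itself satisfy the two semi--entwining axioms. Writing $\psi'(b\otimes a)=a_{\alpha'}\otimes b^{\alpha'}$, one computes $\psi'(b\otimes a)=(\tau\circ\psi)(a\otimes b)=\tau(b_\alpha\otimes a^\alpha)=a^\alpha\otimes b_\alpha$; so in terms of the original Sweedler data, $a_{\alpha'}=a^\alpha$ and $b^{\alpha'}=b_\alpha$ — i.e. $\psi'$ reverses the roles of the two slots of $\psi$. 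The unit axiom for $\psi'$, namely $\psi'(b\otimes 1_A)=1_A\otimes b$, unwinds via the above to $\psi(1_A\otimes b)=b\otimes 1_A$, which is precisely condition (a) from the previous paragraph. The multiplicativity axiom for $\psi'$, $\psi'(b\otimes aa')=a_{\alpha'}a'_{\beta'}\otimes b^{\alpha'\beta'}$, unwinds (using that the first index of $\psi'$ is the second index of $\psi$, applied successively) to $\psi(aa'\otimes b)=b_{\beta\alpha}\otimes a^\alpha a'^\beta$ — after a careful bookkeeping of which composite of $\psi$'s the iterated index stands for — and this is exactly condition (b) with the names of $a,b$ swapped. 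Hence the two sets of conditions coincide, giving the equivalence.

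The main obstacle, and the only place where genuine care rather than routine substitution is needed, is the index-chasing in the multiplicativity axiom: on each side one has a \emph{triple} tensor and must verify that the doubly-indexed Sweedler symbol ($b^{\alpha\beta}$ on one side, $b_{\beta\alpha}$ on the other) really denotes the same composite of two applications of the relevant map, with the indices attached in the correct order. I would handle this cleanly by temporarily abandoning Sweedler notation for this one computation and writing everything with explicit composites of $\psi$, $\tau$, and the multiplication $M$ on the three-fold tensor $A\otimes A\otimes A$ — for instance expressing the $A^{\op}$-factorization axiom (b) as $\psi\circ(M^{\op}\otimes\id)=(\id\otimes M^{\op})\circ(\psi\otimes\id)\circ(\id\otimes\psi)$ and the $\psi'$-multiplicativity as $\psi'\circ(\id\otimes M)=(M\otimes\id)\circ(\id\otimes\psi')\circ(\psi'\otimes\id)$, then substituting $\psi'=\tau\psi\tau$ and $M^{\op}=M\circ\tau$ and pushing the twist maps through using naturality. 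Once both identities are in this form, the equivalence is a direct comparison of two equal diagrams. The unit axioms, by contrast, are immediate. I would therefore structure the written proof as: (i) record that the first semi--entwining axiom for $\psi$ is unaffected by passing to $A^{\op}$ in the source; (ii) show unit$\iff$unit; (iii) do the diagrammatic computation matching the two multiplicativity conditions; (iv) conclude. $\qed$
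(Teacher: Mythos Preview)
Your proposal is correct and follows essentially the same approach as the paper's proof: both translate the extra algebra-factorization axioms for $\psi:A^{\op}\otimes A\to A\otimes A^{\op}$ and the semi--entwining axioms for $\psi'=\tau\psi\tau$ into identities on $\psi$ and observe they coincide. The paper carries out one direction in Sweedler notation (arriving at $c_{\alpha\beta}\otimes a^\beta\opm b^\alpha$) and then says ``similarly'' for the converse, whereas you plan to reduce both sides to a common form and clean up the double-index bookkeeping diagrammatically; this is a presentational refinement rather than a different argument.
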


{\bf Proof.}
Assume that there exists  a semi--entwining map $\psi'=\tau \circ \psi \circ \tau $.
Denote $\psi'(a\otimes b)=b_{\alpha'}\otimes a^{\alpha'}$, for all $a,b\in A$, i.e.,
$a_{\alpha}\otimes b^{\alpha}=a^{\alpha'}\otimes b_{\alpha'}$.
Also denote by $\opm$ the multiplication in $A^\op$, i.e., for all $a,b\in A$,
$a\opm b\equiv ba$.
Then we must check  conditions of Definition \ref{ent}. For all $a,b,c\in A$,
\begin{align*}
\psi(1_{A^\op}\otimes c)&=\tau\circ\psi'\circ \tau(1_{A^\op}\otimes c)
=\tau\circ\psi'(c\otimes 1_{A^\op})=c\otimes 1_{A^\op},\\
\psi(a\opm b\otimes c)&=\tau\circ\psi'\circ\tau(ba\otimes c)
=\tau\circ\psi'(c\otimes ba)=\tau(b_{\alpha'}a_{\beta'}\otimes c^{\alpha'\beta'})\\
{}&=c^{\alpha'\beta'}\otimes a_{\beta'}\opm b_{\alpha'}
=c_{\alpha\beta}\otimes a^{\beta}\opm b^{\alpha}
\end{align*}

Similarly one can prove the converse.
\qed

\begin{rem} { \bf [Example of algebra factorization for Theorem 3.23]}

We consider the algebra
$ A = A^{op}  = \frac{\mathbb{K}[X]}{(X^2- p)} $, where $ p $ is a  scalar.
Then $A$ has the basis $\{1, x \}$, where $x$ is the image of $X$ in the factor
ring; so, $ x^2 = p $.

If $ q $ is a scalar, then
$\psi :A^{op} \otimes A\rightarrow A\otimes A^{op}$, defined as follows,
\begin{eqnarray*}
&& \psi (1\otimes 1) =  1 \otimes 1 \\
&& \psi (1\otimes x) =  x \otimes 1 \\
&& \psi (x\otimes 1) = 1 \otimes x \\
&& \psi (x\otimes x) = q 1 \otimes 1 -  x \otimes x
\end{eqnarray*}
is an algebra factorization.

Notice that if $ q= 2 p $, then $ \psi $ is the same algebra factorization with
(\ref{map}).

\end{rem}

\bigskip

\begin{thm}  Let $A$ be an algebra, $B$ and $M$  vector spaces, $z \in B \ (z \neq 0)$,
$\psi:B\otimes A\rightarrow A\otimes B$ a semi--entwining, and
let $M$ be an $(A,B,\psi)$-semi--entwined module with the right
measuring $ \phi $.
We consider the maps:

 $X = \psi \circ \tau_{B,A}:B\otimes A\rightarrow B\otimes A$

$\eta : M \otimes A \rightarrow M \otimes A, \ \
m \otimes a \mapsto ma \otimes 1_A$, and

$\zeta : M \otimes B \rightarrow M \otimes B, \ \
m \otimes b \mapsto \phi (m \otimes b) \otimes z$.

Then, the following equation holds:
$$ [\zeta , \eta , X] = 0 $$

\end{thm}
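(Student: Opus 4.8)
The claim $[\zeta,\eta,X]=0$ unpacks, by definition of the Yang--Baxter commutator, to
$$
\zeta_{12}\circ\eta_{13}\circ X_{23}=X_{23}\circ\eta_{13}\circ\zeta_{12}
$$
as maps $M\otimes M\otimes A\to M\otimes M\otimes A$ — wait, that is not right either, since $\zeta$ acts on $M\otimes B$, $\eta$ on $M\otimes A$ and $X$ on $B\otimes A$. The correct reading is that the commutator is taken for the triple of maps on $V\otimes V'$, $V\otimes V''$, $V'\otimes V''$ with $V=M$, $V'=B$, $V''=A$; so the underlying object is $M\otimes B\otimes A$ and the identity to prove is
$$
\zeta_{12}\circ\eta_{13}\circ X_{23}=X_{23}\circ\eta_{13}\circ\zeta_{12}\colon\quad M\otimes B\otimes A\longrightarrow M\otimes B\otimes A.
$$
So the first step is simply to write both sides as explicit maps and chase a generic element $m\otimes b\otimes a$.

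**The computation.** Starting from $m\otimes b\otimes a$, the left-hand side first applies $X_{23}=\psi\circ\tau_{B,A}$ in the last two legs, giving $\sum m\otimes a_\alpha\otimes b^\alpha$ (using the Sweedler-like notation $\psi(b\otimes a)=a_\alpha\otimes b^\alpha$); then $\eta_{13}$ sends this to $\sum ma_\alpha\otimes a_\alpha'\otimes\cdots$ — more precisely $\eta$ acts on the $M$-leg and the $A$-leg (legs $1$ and $3$), multiplying and replacing the $A$-entry by $1_A$, so we get $\sum (m\,a_\alpha)\otimes b^\alpha\otimes 1_A$; finally $\zeta_{12}$ acts on legs $1,2$ (the $M$-leg and the $B$-leg), producing $\sum \phi(m a_\alpha\otimes b^\alpha)\otimes z\otimes 1_A$. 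For the right-hand side: $\zeta_{12}$ first gives $\phi(m\otimes b)\otimes z\otimes a$; then $\eta_{13}$ multiplies legs $1,3$, giving $(\phi(m\otimes b)\,a)\otimes z\otimes 1_A$; then $X_{23}=\psi\circ\tau_{B,A}$ acts on legs $2,3$, i.e. on $z\otimes 1_A$, and since $\psi(z\otimes 1_A)=1_A\otimes z$ by the unit axiom of a semi--entwining map (and $\tau$ just swaps), we get $(\phi(m\otimes b)\,a)\otimes z\otimes 1_A$. Hence the two sides agree precisely when
$$
\sum \phi(m a_\alpha\otimes b^\alpha)=\phi(m\otimes b)\,a\qquad\text{for all }m\in M,\ a\in A,\ b\in B,
$$
which is exactly the defining condition of an $(A,B,\psi)$-semi--entwined module, $m a_\alpha\triangleleft b^\alpha=(m\triangleleft b)a$. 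So the result follows directly.

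**Where the care is needed.** There is essentially no analytic obstacle here; the whole proof is a bookkeeping exercise, and the only thing one must get right is the placement of the subscripts $12$, $13$, $23$ — in particular that $X_{23}$ on the right-hand side acts on an element whose $A$-leg has already been collapsed to $1_A$ by $\eta_{13}$, so that the unit axiom $\psi(b\otimes 1_A)=1_A\otimes b$ makes $X_{23}$ act trivially there. I would state this explicitly, since it is the one place a reader could stumble. After that, matching the two outputs is immediate from the semi--entwined module axiom, and I would close by remarking that the identity $[\zeta,\eta,X]=0$ is thus equivalent to that axiom. I would present the argument as the short element-chase above rather than as a diagram chase.

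\bigskip

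{\bf Proof.}
By definition of the Yang--Baxter commutator applied to the triple $(\zeta,\eta,X)$ of maps on $M\otimes B$, $M\otimes A$ and $B\otimes A$, the assertion $[\zeta,\eta,X]=0$ means that
$$
\zeta_{12}\circ\eta_{13}\circ X_{23}=X_{23}\circ\eta_{13}\circ\zeta_{12}
$$
as maps $M\otimes B\otimes A\rightarrow M\otimes B\otimes A$.

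We evaluate both sides on an arbitrary $m\otimes b\otimes a$, writing $\psi(b\otimes a)=a_\alpha\otimes b^\alpha$. For the left-hand side, $X_{23}=(\psi\circ\tau_{B,A})_{23}$ gives $m\otimes a_\alpha\otimes b^\alpha$; then $\eta_{13}$ (acting on the $M$-leg and the $A$-leg) yields $(ma_\alpha)\otimes b^\alpha\otimes 1_A$; finally $\zeta_{12}$ produces $\phi(ma_\alpha\otimes b^\alpha)\otimes z\otimes 1_A$, that is, $(ma_\alpha\triangleleft b^\alpha)\otimes z\otimes 1_A$. For the right-hand side, $\zeta_{12}$ gives $\phi(m\otimes b)\otimes z\otimes a=(m\triangleleft b)\otimes z\otimes a$; then $\eta_{13}$ gives $\big((m\triangleleft b)a\big)\otimes z\otimes 1_A$; and $X_{23}$ now acts on $z\otimes 1_A$, where $\psi(z\otimes 1_A)=1_A\otimes z$ by the unit axiom of the semi--entwining map, so $X_{23}$ leaves this unchanged and we obtain $\big((m\triangleleft b)a\big)\otimes z\otimes 1_A$.

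Comparing, the two sides coincide for all $m,b,a$ if and only if $ma_\alpha\triangleleft b^\alpha=(m\triangleleft b)a$, which is precisely the defining condition for $M$ to be an $(A,B,\psi)$-semi--entwined module. Since $M$ is assumed to be such a module, the identity $[\zeta,\eta,X]=0$ holds.
\qed
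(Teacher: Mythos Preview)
Your proof is correct and is exactly the direct computation the paper has in mind; the paper's own proof is the single line ``The proof follows by direct computations,'' and you have supplied those computations accurately, reducing both sides to $(ma_\alpha\triangleleft b^\alpha)\otimes z\otimes 1_A$ versus $((m\triangleleft b)a)\otimes z\otimes 1_A$ and invoking the semi--entwined module axiom.

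One small bookkeeping point: as written, $X=\psi\circ\tau_{B,A}$ does not type-check (since $\tau_{B,A}:B\otimes A\to A\otimes B$ and $\psi$ also has domain $B\otimes A$); for the commutator $[\zeta,\eta,X]$ to make sense one needs $X:B\otimes A\to B\otimes A$, so the intended map is $X=\tau_{A,B}\circ\psi$, giving $X(b\otimes a)=b^\alpha\otimes a_\alpha$. With that, your intermediate step $X_{23}(m\otimes b\otimes a)$ should read $m\otimes b^\alpha\otimes a_\alpha$ (so that $\eta_{13}$ genuinely acts on legs $1$ and $3$), and then everything proceeds exactly as you wrote with the same final outputs. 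This is purely a labeling issue inherited from the statement and does not affect your argument.
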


{\bf Proof.}
The proof follows by direct computations. \qed

\begin{rem}
The relation $ [\zeta , \eta , X] = 0 $ from the above theorem is related
to Section 3.6 of \cite{wisbauer}.
\end{rem}

\bigskip

\end{document}